\documentclass[12pt,reqno]{amsart}
\usepackage{amsmath}
\usepackage{amssymb,enumitem}
\usepackage{mathtools}
\usepackage{hyperref}
\usepackage{faktor}
\usepackage{todonotes}
\usepackage[
  backend=biber,
  style=alphabetic,
  url=false,
  isbn=false
]{biblatex}
\usepackage{ifthen}

\newtheorem {Theorem}    {Theorem}[section]
\newenvironment{Theorem*}
  {\Theorem}
  {\endTheorem}
\newtheorem {Lemma}      [Theorem]    {Lemma}

\newtheorem {Proposition}[Theorem]    {Proposition}

\theoremstyle{definition}
\newtheorem {Definition} [Theorem]    {Definition}
\newtheorem {Example}    [Theorem]    {Example}
\newtheorem {Remark}    [Theorem]    {Remark}
\newtheorem {Exercise}   {Exercise}
\newenvironment{Exercise*}
  {\Exercise}
  {\endExercise}
\newtheorem {Question}   [Theorem]    {Question}
\newcounter{AbcT}

\numberwithin{equation}{section}

\newcommand {\N} {{\mathbb N}}

\newcommand {\T} {{\mathbb T}}

\newcommand {\Z} {{\mathbb Z}}

\newcommand {\cU} {{\mathcal U}}

\renewcommand{\liminf}{\varliminf}
\renewcommand{\limsup}{\varlimsup}

\DeclareMathOperator{\mdim}{mdim}

\newcommand {\IGNORE}[1]  {}

\renewcommand {\setminus}       {\smallsetminus}

\newcommand{\myentry}[1]{\ifthenelse{\equal{#1}{.}}{}{#1}}

\DeclareMathOperator{\Spoke}{Spoke}
\title{Uniformly positive mean dimension}
\author{Tal Barak}
\author{Elon Lindenstrauss}
\thanks{The authors acknowledge support by ERC advanced grant HomDyn (grant number 833423).}
\date{\today}

\begin{document}

\begin{abstract}
We study the relation between uniformly positive entropy and uniformly positive
mean dimension at the level of fixed open covers. To a symbolic system \(X\),
we associate a hub-and-spoke system \(\operatorname{Spoke}(X)\), obtained by
replacing each symbol by a one-dimensional spoke attached to a common hub. We
prove that if \(X\) admits a shift-invariant measure of full support, then
\(\operatorname{Spoke}(X)\) has completely positive mean dimension. We also
prove that if \(X\) has uniformly positive entropy, then \(\operatorname{Spoke}(X)\)
has uniformly positive mean dimension. Finally, using symbolic codings of
irrational rotations on tori, we construct hub-and-spoke systems with completely
positive mean dimension but without uniformly positive mean dimension or
uniformly positive entropy. The examples are nondegenerate: the relevant covers
have zero mean dimension and zero entropy, but when refined by iterating under the dynamics the corresponding covering numbers are unbounded.
\end{abstract}

\maketitle
\setcounter{tocdepth}{1}
\tableofcontents
\section{Introduction}

Mean dimension is an invariant introduced by Gromov to study certain
infinite-dimensional dynamical systems. In~\cite{Lindenstrauss-Weiss},
Weiss and the second named author linked mean dimension to topological
entropy.

Suppose \((X,T)\) is a topological dynamical system, i.e. \(X\) is a compact
metrizable space and \(T:X\to X\) is a homeomorphism. If \(\cU\) is an open
cover of \(X\), we let \(\mathcal D(\cU)\) denote the dimension captured by
\(\cU\), i.e. the minimal dimension of a CW-complex \(Y\) such that there is a
continuous map
\[
f:X\to Y
\]
and an open cover \(\widetilde{\cU}\) of \(Y\) such that
\(f^{-1}(\widetilde{\cU})\) refines \(\cU\).
More concretely, one can define
\[
\mathcal D (\cU) = \min\operatorname{ord} (\gamma)
\]
over all open covers $\gamma$ refining $\cU$, with $\operatorname{ord}(\gamma) + 1$ being the maximal multiplicity of the cover $\gamma$.

 One easily sees
from the definition that for any two covers \(\cU,\mathcal V\),
\[
\mathcal D(\cU\vee \mathcal V)
\leq
\mathcal D(\cU)+\mathcal D(\mathcal V).
\]
Hence, if
\[
\cU_a^b=\bigvee_{i=a}^b T^{-i}\cU,
\]
then the limit
\[
\lim_{n\to\infty}
\frac{\mathcal D(\cU_0^{n-1})}{n}
\]
exists. Set
\[
\mdim(X,T)
=
\sup_{\cU}
\lim_{n\to\infty}
\frac{\mathcal D(\cU_0^{n-1})}{n},
\]
where the supremum is taken over all finite open covers of \(X\). It will be
convenient to denote
\[
\mdim(X,T,\cU)
=
\lim_{n\to\infty}
\frac{\mathcal D(\cU_0^{n-1})}{n}.
\]

Recall also that topological entropy is defined by
\[
h_{\mathrm{top}}(X,T)
=
\sup_{\cU}
\lim_{n\to\infty}
\frac{\log \mathcal N(\cU_0^{n-1})}{n},
\]
where \(\mathcal N(\cU)\) denotes the
minimal cardinality of a subcover of \(\cU\) covering \(X\).
Again, the inner limit exists by subadditivity; we denote it by
\[
h_{\mathrm{top}}(X,T,\cU).
\]

Let \(d\) be a metric on \(X\) compatible with the topology. For an open cover
\(\cU\) of \(X\), set
\[
\operatorname{mesh}(\cU,d)
=
\max_{U\in\cU}\operatorname{diam}(U),
\]
and let
\[
N(X,d,\epsilon)
=
\min\{|\cU|:\cU \text{ is an open cover of }X
\text{ with }\operatorname{mesh}(\cU,d)<\epsilon\}.
\]
Now set
\[
d_N(x,y)=\max_{0\leq n<N}d(T^n x,T^n y)
\]
and
\[
S(X,T,d,\epsilon)
=
\lim_{n\to\infty}
\frac{\log N(X,d_n,\epsilon)}{n}.
\]
If
\[
\underline{\mdim}_M(X,T,d)
=
\liminf_{\epsilon\searrow0}
\frac{S(X,T,d,\epsilon)}{\log(1/\epsilon)},
\]
then it was shown in~\cite{Lindenstrauss-Weiss} that for any compatible metric
\(d\),
\[
\mdim(X,T)
\leq
\underline{\mdim}_M(X,T,d).
\]
It was shown in~\cite{Lindenstrauss-mean-dimension} that if \((X,T)\) has the
marker property, in particular if \((X,T)\) has an infinite minimal factor,
then there is a compatible metric \(d\) such that
\[
\mdim(X,T)=\underline{\mdim}_M(X,T,d).
\]
Both results relating mean dimension and entropy were extended by Tsukamoto
and the second named author in~\cite{Lindenstrauss-Tsukamoto-GAFA}; see also
\cite{Lindenstrauss-Tsukamoto-IEEE}. In particular, under the marker property
there is a compatible metric \(d\) such that
\[
\mdim(X,T)=\overline{\mdim}_M(X,T,d),
\]
where
\[
\overline{\mdim}_M(X,T,d)
=
\limsup_{\epsilon\searrow0}
\frac{S(X,T,d,\epsilon)}{\log(1/\epsilon)}.
\]

While these results give a satisfying answer regarding the relation between
mean dimension and entropy, they do not work at the level of a fixed cover:
they do not directly relate \(\mdim(X,T,\cU)\) and
\(h_{\mathrm{top}}(X,T,\cU)\) for a fixed \(\cU\). The aim of this paper is to study the relation between uniformly positive
entropy and uniformly positive mean dimension.  We focus on spoke systems
associated to symbolic dynamics, where the entropy-theoretic independence
structure of the symbolic base can be converted into mean-dimensional
positivity of the corresponding spoke system.

In his paper \cite{blanchard-upe-cpe}, Blanchard considered two different notions that can be thought of as a topological analogue of the ergodic-theoretic class of
\(K\)-systems, i.e. systems with no zero entropy factors. Here and
below, by a standard open cover we mean a two-element open cover
\(\{U,V\}\) such that neither \(U\) nor \(V\) is dense.

\begin{Definition}[\cite{blanchard-upe-cpe}]
\begin{enumerate}
    \item A topological dynamical system has Completely Positive Entropy
    (CPE) if it has no nontrivial topological zero entropy factors.

    \item A topological dynamical system has Uniformly Positive Entropy
    (UPE) if for every standard open cover \(\cU\) of \(X\), one has
    \[
    h_{\mathrm{top}}(X,T,\cU)>0.
    \]
\end{enumerate}
\end{Definition}

Clearly UPE implies CPE, but Blanchard shows in
\cite{blanchard-upe-cpe} that the converse need not hold. In their paper
\cite{Garcia-Gutman-mdim}, Garc\'ia-Ramos and Gutman introduced analogous
properties for mean dimension, in a much wider context than the one considered
here, namely actions of general sofic groups.

\begin{Definition}
\begin{enumerate}
    \item A topological dynamical system has Completely Positive Mean Dimension
    (CPMD) if it has no nontrivial topological zero mean dimension factors.

    \item A topological dynamical system has Uniformly Positive Mean Dimension
    (UPMD) if for every standard open cover \(\cU\) of \(X\), one has
    \[
    \mdim(X,T,\cU)>0.
    \]
\end{enumerate}
\end{Definition}

\noindent
These notions also arose independently in the first named author's MSc thesis.

Again, UPMD implies CPMD. Garc\'ia-Ramos and Gutman give an example of a CPMD
system that is not UPMD. While this is not stated explicitly, mainly because
the concept of mean dimension was not yet available, an example in
\cite{Lindenstrauss-lowering-entropy} gives a minimal system with CPMD.

Garc\'ia-Ramos and Gutman raise the following question.

\begin{Question}\label{question 1}
Does every UPMD system have UPE?
\end{Question}

Since finite entropy systems have zero mean dimension, CPMD implies CPE;
indeed, every nontrivial factor of a CPMD system has infinite topological
entropy. The relation between UPMD and UPE is more subtle.

We investigate this question for a particular class of examples, which we call the `hub-and-spoke construction' (here spoke is used as a noun, i.e.\ a spoke of a wheel). The construction is built from a symbolic system, which plays the role of the `base'. These examples may be constructed as follows:
Let \((X,\sigma)\) be a symbolic shift,
i.e. \(X\) is a closed shift-invariant subset of
\[
\{1,\ldots,a\}^{\mathbb Z}
\]
for some \(a\in\mathbb N\). Let \(\overline{\mathbb D}\) be the closed unit disk
in \(\mathbb R^2\). Choose distinct points
\[
p_1,\ldots,p_{a}\in\partial\mathbb D.
\]
Define
\[
\Phi:X\times[0,1]^{\mathbb Z}\longrightarrow\overline{\mathbb D}^{\mathbb Z}
\]
by
\[
\Phi(x,t)_i=t_i p_{x_i}.
\]
We write
\[
\Spoke(X)=\Phi\bigl(X\times[0,1]^{\mathbb Z}\bigr).
\]
Then \(\Spoke(X)\) is a closed, shift-invariant subsystem of
\((\overline{\mathbb D}^{\mathbb Z},\sigma)\). It contains the constant zero
point as a fixed point, and, for nonempty \(X\), has mean dimension \(1\).

We remark that when \(X=\{0,1,2\}^{\mathbb Z}\), the resulting system \(\Spoke(X)\), and
large minimal subshifts in it, were used by Tsukamoto and the second named
author in~\cite{Lindenstrauss-Tsukamoto} (without using this terminology) to give examples of minimal systems
of mean dimension \(\frac12+\epsilon\) that cannot be embedded in
\[
\bigl([0,1]^{\mathbb Z},\sigma\bigr).
\]

The example of a CPMD system that is not UPMD given by Garc\'ia-Ramos and
Gutman in~\cite{Garcia-Gutman-mdim} can be viewed as a simple hub-and-spoke
system of this type.

Unless the symbolic system is rather degenerate, a hub-and-spoke system will have CPMD. Indeed, we prove the following:

\begin{Theorem}
\label{Invariant measure of full support implies CPMD}
Let \((X,\sigma)\) be a symbolic system that has a shift-invariant measure
\(\mu\) of full support. Then \((\Spoke(X),\sigma)\) has CPMD.
\end{Theorem}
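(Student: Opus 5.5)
Let \(\pi:\Spoke(X)\to Z\) be a factor map onto a nontrivial system \((Z,S)\). We must show \(\mdim(Z,S)>0\).

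\textbf{Step 1: distinguishing the hub.} Since \(Z\) is nontrivial, there is a point \(y=\Phi(x,t)\) with \(\pi(y)\neq\pi(0)\), where \(0\) is the fixed hub point. Choose disjoint closed neighbourhoods \(A\ni\pi(0)\) and \(B\ni\pi(y)\) in \(Z\). By continuity and the product topology, there are a finite window \([-m,m]\) and \(\delta>0\) with the following properties.
\begin{itemize}
\item Every point \(w\in\Spoke(X)\) with \(|w_i|<\delta\) for all \(|i|\le m\) satisfies \(\pi(w)\in A\).
\item Every point \(w\) with \(|w_i-y_i|<\delta\) for \(|i|\le m\) satisfies \(\pi(w)\in B\).
\end{itemize}
Using full support of \(\mu\), the cylinder \([x_{-m}\dots x_m]\) has positive measure. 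By the ergodic theorem (or simply Poincar\'e recurrence along with invariance), there is a point \(x'\in X\) in which this word appears with positive upper density \(c>0\). Passing to an ergodic component if needed, it appears with frequency \(c>0\) for \(\mu\)-typical \(x'\).

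\textbf{Step 2: embedding cubes.} Fix a large \(n\). In the block \([0,n)\) of \(x'\), greedily choose occurrences \(j_1<\dots<j_k\) of the word \(w=x_{-m}\dots x_m\) centred at positions spaced at least \(2m+1+L\) apart. Here \(L\) is a fixed separation to be chosen. This gives \(k\ge c' n\) with \(c'>0\) depending only on \(c,m,L\).

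Define a map \(F:[0,1]^k\to\Spoke(X)\) by
\[
F(s)=\Phi(x',\tau(s)),
\]
where \(\tau(s)\) equals \(s_l\,t_{i}\) on coordinate \(j_l+i\) for \(|i|\le m\) and equals \(0\) elsewhere. Points \(\Phi(x',\cdot)\) with arbitrary radial parameters lie in \(\Spoke(X)\), so \(F\) is well defined. It is continuous and injective after scaling, since \(t\) may be taken with \(t_0\neq0\).

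Consider the composition \(G=\bigl(\pi(\sigma^{j_l}F(\cdot))\bigr)_{l}\). Coordinate \(l\) lies in \(A\) when \(s_l=0\), by the window property together with the separation, because the neighbouring blocks are outside the window. It lies in \(B\) when \(s_l=1\). This holds regardless of the other coordinates, after choosing \(L>0\) (indeed \(L=0\) suffices, since the windows are disjoint).

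\textbf{Step 3: the Lebesgue covering argument.} Let \(\cU=\{Z\setminus A,\,Z\setminus B\}\), an open cover of \(Z\). Pulled back by \(G\), the cover \(\cU_0^{n-1}\) of \(Z\) restricted to the cube is refined as follows: each element fails to meet some opposite pair of faces \(\{s_l=0\}\), \(\{s_l=1\}\). By the Lebesgue covering theorem, any open cover of \([0,1]^k\) no member of which meets two opposite faces has order at least \(k\). Hence \(\mathcal D(\cU_0^{n-1})\ge k\ge c'n\), and so \(\mdim(Z,S,\cU)\ge c'>0\).

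\textbf{Main obstacle.} The delicate step is the uniformity in Step 1: the neighbourhood condition must hold whatever the other coordinates outside the window are. This is exactly what the product-topology choice of \((m,\delta)\) gives, provided we use continuity of \(\pi\) at the points \(0\) and \(y\) together with compactness. The uniform-continuity version is obtained by replacing pointwise neighbourhoods by the cylinder sets \(\{w:|w_i-y_i|<\delta,\ |i|\le m\}\). This requires shrinking \(A\) and \(B\) so that the preimage of \(Z\setminus A\) is closed and disjoint from the compact set \(\{w:|w_i|\le\delta/2,\ |i|\le m\}\). That does not quite hold, so I would instead take \(A\) and \(B\) to be the images of these compact cylinder-type sets. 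These images are compact and disjoint for small \(\delta\): their intersections over \(\delta\) and \(m\) shrink to \(\pi(\{0\})\) and \(\pi(\{y\})\). With this choice, \(Z\setminus A\) and \(Z\setminus B\) form an open cover, which is what the argument needs.
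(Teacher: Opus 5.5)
Your proof is correct and follows essentially the same route as the paper. You separate the hub from another point through a finite window, use the full-support invariant measure to find linearly many disjoint occurrences of the corresponding symbolic word, rescale the radii in each window to embed $[0,1]^k$, and apply the Lebesgue covering lemma. The differences are cosmetic: the paper gets a good point $x^{(M)}$ for each $M$ just by integrating $\sum_{n<M}1_A\circ\sigma^n$ against $\mu$ instead of invoking Birkhoff's theorem, and it pulls the cover back to $\Spoke(X)$ rather than mapping the cube into the factor. The worry in your last paragraph is unfounded: basic product neighbourhoods of $0$ and $y$ constrain only finitely many coordinates, so your two bullet points already follow directly from continuity of $\pi$ at those points, with no compactness argument needed.
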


\noindent
Note that we do not assume the measure of full support to be ergodic, nor that $X$ is infinite.

\medskip

Whether a hub-and-spoke system has UPMD is more delicate. Our positive result is the following: if the symbolic base has UPE, then the resulting hub-and-spoke system has UPMD.

\begin{Theorem}
\label{UPE symbolic implies spoke UPMD}
Let \((X,\sigma)\) be a symbolic system with UPE. Then the corresponding hub-and-spoke system 
\( (\Spoke(X),\sigma) \)
has UPMD.
\end{Theorem}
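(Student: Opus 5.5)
Given a standard open cover $\cU=\{U,V\}$ of $\Spoke(X)$, the plan is to find a single coordinate together with two disjoint closed ``bad'' configurations at that coordinate, one avoided by $U$ and the other avoided by $V$, and then to transfer entropy positivity of the base into dimension growth along the spokes. Since neither $U$ nor $V$ is dense, there are points $y\notin\overline{V}$ and $z\notin\overline{U}$, and open cylinder neighbourhoods $A\ni y$, $B\ni z$ with $A\cap V=\emptyset$ and $B\cap U=\emptyset$. These are determined by finitely many coordinates $[-m,m]$. Using the product structure of $\Spoke(X)$, we first perturb $y,z$ so that all spoke parameters in the window are strictly positive and generic. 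The window patterns of $y$ and $z$ then project to words $w_y,w_z$ in $X$, together with radii vectors $s_y,s_z\in(0,1]^{2m+1}$. Two cases arise. If $w_y\neq w_z$, the sets $\{x:x_{[-m,m]}=w_y\}$ and $\{x:x_{[-m,m]}=w_z\}$ are disjoint closed sets in $X$, so $\{X\setminus[w_y],X\setminus[w_z]\}$ is a standard cover of $X$, and UPE gives it positive entropy. If $w_y=w_z$, the points differ only in radii, and the aim is to use dimension in the spoke directions directly: on the cylinder of $w_y$, the radii parameters live in a cube, and $A,B$ restricted to that cube contain disjoint small boxes.

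The core step is a lower bound on $\mathcal D(\cU_0^{n-1})$. For the case $w_y\ne w_z$, I would use the standard interpretation of UPE via interpolating sets, namely the independence characterisation of Kerr--Li: a pair of disjoint closed sets $(C_0,C_1)$ in a UPE system has an independence set $J\subset\mathbb Z$ of positive density. Thus for every $\omega\in\{0,1\}^{J\cap[0,n)}$ there is $x\in X$ with $\sigma^j x\in C_{\omega_j}$. Shrink $J$ to a $(2m+1)$-separated subset, which still has positive density. I would then build a map from a cube $[0,1]^{J\cap[0,n)}$ into $\Spoke(X)$. Choosing patterns is not continuous, so instead I interpolate through the hub. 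At each $j\in J$, a parameter $t_j\in[0,1]$ moves the window at $j$ continuously: from the $y$-configuration at $t_j=0$, radially to $0$ at $t_j=1/2$, and then out to the $z$-configuration at $t_j=1$. All other coordinates are set to $0$, which lies in $\Spoke(X)$ because scaling radii to $0$ is allowed. The symbol at coordinate $j$ only matters when the radius is positive, and then only the side ($y$ or $z$) is used, which the independence set makes realizable. This should give a continuous embedding-like map $F:[0,1]^{k}\to\Spoke(X)$ with $k=|J\cap[0,n)|$, such that faces $t_j=0$ land in $\sigma^{-j}A$ and faces $t_j=1$ in $\sigma^{-j}B$.

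Then apply the Lebesgue covering theorem, the cube-face argument from the standard proof that $\mdim$ of the full shift on $[0,1]^{\mathbb Z}$ is $1$. Any element of $\cU_0^{n-1}$ pulled back by $F$ is contained in a set that, for each $j\in J$, misses either the face $t_j=0$ or the face $t_j=1$. This is because an element of the join lies in $\sigma^{-j}U$ or $\sigma^{-j}V$, hence misses $\sigma^{-j}B$ or $\sigma^{-j}A$. By the Lebesgue lemma, any refining cover then has order at least $k$, so $\mathcal D(\cU_0^{n-1})\ge k\ge cn$ and $\mdim(\Spoke(X),\sigma,\cU)\ge c>0$. The case $w_y=w_z$ is easier: take any $x$ with that word at a $(2m+1)$-separated lattice of times, for instance a point of $X$ whose orbit visits the cylinder with positive frequency, which UPE (hence full support) provides. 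Then run the same face argument, varying only radii inside the cylinder.

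The main obstacle is the continuity of $F$ while passing through the hub. I must check that the interpolated windows stay in $\Spoke(X)$ for every $t$, and this requires a single base point $x^{(t)}$ realizing all side choices simultaneously. Varying the base symbols as $t$ changes is acceptable precisely because the radius is $0$ at the switching moment, so I would make the symbol assignment depend only on which $j$ have $t_j>1/2$. A second point to verify is that the perturbation making the radii positive keeps $y\in A$ and $z\in B$, which holds because $A,B$ are open.
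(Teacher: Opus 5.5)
Your proposal is correct and is essentially the paper's proof: cylinder neighbourhoods of points outside $\overline{U}$ and $\overline{V}$, a Kerr--Li independence set of positive upper density for the two base cylinders (thinned to be $(2N+1)$-separated), a cube map that interpolates each window through the hub, and the opposite-faces lemma. Your radius perturbation and case split are unnecessary: the paper applies the independence corollary directly to $([u],[v])$, which also covers $u=v$, so it never appeals separately to a fully supported invariant measure. Likewise, your switch at $t_j=1/2$ works just as well as the paper's flat zero segment on $[1/3,2/3]$.
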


It is easy to see that if the symbolic base fails to have UPE, the hub-and-spoke system will also fail UPE, so in this case UPE implies UPMD. We recall that by a result of Glasner and Weiss \cite{Glasner-Weiss-Strictly-ergodic-uniform-positive-entropy-models}, systems for which there is a fully supported invariant measure which is $K$ have UPE; in particular, symbolic systems with UPE exist.

As mentioned above, CPMD alone does not force UPMD.
However, the example in~\cite{Garcia-Gutman-mdim} of a system \((Z,T)\) with
CPMD but not UPMD is somewhat unsatisfactory in that the open cover \(\cU\) for
which
\[
\mdim(Z,T,\cU)=0
\]
is dynamically degenerate:
\[
\mathcal N(\cU_0^{n-1})=\mathcal N(\cU)
\qquad
(n\geq1).
\]
Our final family of examples are hub-and-spoke
systems with CPMD but without UPMD, witnessed by a standard two-set cover
\(\cU\) for which
\[
\mdim(Z,\sigma,\cU)=0,
\]
but so that the covering number $\mathcal N(\cU_0^{n-1})\to\infty$.
These systems are based on a symbolic coding of irrational rotation on tori.

\subsection*{Acknowledgments}
This paper is based on the first named author's M.Sc.\ thesis, conducted under the guidance of the second named author. The authors would like to thank Benjy Weiss for his interest and encouragement to submit these results as a research paper. We also thank Eli Glasner for bringing to our attention the results of \cite{Garcia-Gutman-mdim}.

\section{Existence of an invariant measure of full support implies CPMD}
In this section we will present the proof of Theorem \ref{Invariant measure of full support implies CPMD}.
Recall that this theorem states that if $(X,\sigma)$ is a symbolic system with a shift-invariant measure $\mu$ of full support, then $(\Spoke(X),\sigma)$ has CPMD. 

Before presenting the proof, we need to recall the following standard result in dimension theory:

\begin{Lemma}
\label{bound of covering number of with no opposing faces}
If $\cU$ is a finite open cover of $[0,1]^d$ such that no $U\in \cU$ intersects two opposing faces of the cube, then $\operatorname{ord}(\cU)\geq d$.
\end{Lemma}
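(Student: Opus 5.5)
This is the Lebesgue covering theorem in its ``opposing faces'' form. I would derive it from the Brouwer fixed point theorem, or equivalently from the fact that there is no retraction of the cube onto its boundary.

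\emph{Step 1: group the open sets.} Write $F_i^0=\{x_i=0\}$ and $F_i^1=\{x_i=1\}$ for $i=1,\dots,d$. By hypothesis, each $U\in\cU$ misses at least one of $F_i^0,F_i^1$ for every $i$. Suppose for contradiction that $\operatorname{ord}(\cU)\le d-1$, i.e.\ every point lies in at most $d$ members of $\cU$. For each $U$ I record, for every $i$, a choice of face it misses. Setting aside the case where $U$ meets neither face, I would partition $\cU$ according to coordinate data. The cleaner route is the standard one: take a partition of unity $\{\phi_U\}$ subordinate to $\cU$. Choose $v_U\in[0,1]^d$ by setting $(v_U)_i=1$ if $U\cap F_i^0=\emptyset$, and $(v_U)_i=0$ otherwise; in the latter case $U\cap F_i^1=\emptyset$ by hypothesis. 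Then define the continuous map $g(x)=\sum_U\phi_U(x)v_U$.

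\emph{Step 2: check the boundary behaviour.} If $x\in F_i^0$, then every $U$ with $\phi_U(x)>0$ meets $F_i^0$, so $(v_U)_i=0$ and hence $g(x)_i=0$. Likewise, if $x\in F_i^1$, then every relevant $U$ misses $F_i^1$, so it may meet $F_i^0$ or not. To fix this, I would instead choose $(v_U)_i=0$ when $U$ meets $F_i^0$, $(v_U)_i=1$ when $U$ meets $F_i^1$, and, say, $1/2$ when $U$ meets neither. Then $g$ maps each face $F_i^\epsilon$ into itself, so $g$ is homotopic to the identity rel faces via the straight-line homotopy, which stays in the face.

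\emph{Step 3: use the multiplicity bound.} Each value $g(x)$ lies in the convex hull of at most $d$ of the points $v_U$, since at most $d$ of the $\phi_U$ are positive at $x$. Hence $g([0,1]^d)$ is contained in a finite union of simplices of dimension at most $d-1$, which is nowhere dense in $[0,1]^d$. Pick a point $c$ in the interior not in the image of $g$. Compose $g$ with radial projection from $c$ onto $\partial[0,1]^d$. On the boundary, $g$ maps each face into itself, so the composite restricted to $\partial[0,1]^d$ is homotopic to the identity of the sphere $\partial[0,1]^d\cong S^{d-1}$, via the face-preserving straight-line homotopy, which avoids the interior point $c$. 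This gives a map of the ball whose boundary restriction has degree $1$, which is impossible. That contradiction shows $\operatorname{ord}(\cU)\ge d$.

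The main obstacle is Step 2: making the vertex choice consistent so that $g$ genuinely preserves every face and the homotopy avoids $c$. The choice above, which uses $1/2$ for sets touching neither face, handles this. The remainder is the standard no-retraction argument.
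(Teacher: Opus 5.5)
Your proof is correct. The paper does not prove this lemma; it cites Lindenstrauss--Weiss, Lem.~3.2, where it appears as the classical Lebesgue covering lemma. What you wrote is a self-contained version of the standard argument for it:
\begin{enumerate}
\item Take a partition of unity $\{\phi_U\}$ subordinate to $\cU$ and set $g=\sum_U\phi_U v_U$.
\item The map $g$ sends each face $F_i^\epsilon$ into itself.
\item Its image lies in finitely many convex hulls of at most $d$ points, so it misses some interior point $c$.
\item The map $r_c\circ g$ sends the cube into its boundary, and its restriction to the boundary is $g|_{\partial[0,1]^d}$. That restriction is homotopic to the identity through face-preserving maps, by the straight-line homotopy, which stays in each convex face. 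This contradicts the no-retraction theorem.
\end{enumerate}

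One correction to Step~2. Your first choice of $v_U$ already works, and the reason you gave for abandoning it is backwards. If $x\in F_i^1$ and $\phi_U(x)>0$, then $x\in U$. So $U$ meets $F_i^1$, and by hypothesis it therefore misses $F_i^0$, which gives $(v_U)_i=1$. It is not true that ``every relevant $U$ misses $F_i^1$.''

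The three-valued choice with $1/2$ is equally valid, so the argument stands. You should delete the false start and the dangling sentence in Step~1 about partitioning by coordinate data.

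Two small points of wording:
\begin{itemize}
\item ``Homotopic to the identity rel faces'' should read ``homotopic through maps preserving each face.'' Here ``rel'' would mean fixed pointwise, and the straight-line homotopy does not do that.
\item You use that $\phi_U(x)>0$ implies $x\in U$. This holds, for instance, for $\phi_U=\operatorname{dist}(\cdot,U^c)/\sum_V\operatorname{dist}(\cdot,V^c)$, and is worth stating since it is exactly what makes $g$ face-preserving and the multiplicity bound apply.
\end{itemize}
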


For proof see e.g.\ {\cite[Lem.\ 3.2]{Lindenstrauss-Weiss}}. We can now prove Theorem~\ref{Invariant measure of full support implies CPMD}:

\begin{proof}[Proof of Theorem~\ref{Invariant measure of full support implies CPMD}]
Let $Z=\Spoke(X)$, and suppose that there exists some nontrivial factor
\[
\varphi:(Z,\sigma)\rightarrow (Q,S)
\]
where $(Q,S)$ is a topological dynamical system with zero mean dimension.

Let $\underline{0}$ be the constant zero sequence in $Z$. Since $\varphi$ is not constant, there exists $z'\in Z$ such that
\[
\varphi(z')\neq \varphi(\underline{0}).
\]
Let $g:Q\rightarrow [0,1]$ be a continuous function such that
\[
g(\varphi(\underline{0}))=0,\qquad g(\varphi(z'))=1.
\]
Define
\[
\tilde g=g\circ\varphi:Z\rightarrow [0,1].
\]
Thus
\[
\tilde g(\underline{0})=0,\qquad \tilde g(z')=1.
\]

Write $z'=\Phi(x',r')$ with $x'\in X$ and $r'\in[0,1]^\Z$. Thus, if the spoke directions are denoted by $p_1,\ldots,p_{a}$, then
\[
z'_j=r'_jp_{x'_j}\qquad (j\in\Z).
\]
By the continuity of $\tilde g$ there is an $N\in\N$ such that if
\[
w|_{[-N,N]}=z'|_{[-N,N]},
\]
then $\tilde g(w)>0.9$, and if
\[
w|_{[-N,N]}=\underline{0}|_{[-N,N]},
\]
then $\tilde g(w)<0.1$.

Let $\mathcal P$ be the open cover of $Q$
\[
\mathcal P=\left\{\{\xi\in Q:g(\xi)>0.4\},\{\xi\in Q:g(\xi)<0.6\}\right\},
\]
and set
\[
\tilde{\mathcal P}:=\varphi^{-1}(\mathcal P)=\{\varphi^{-1}(P):P\in\mathcal P\}.
\]
Since $\tilde{\mathcal P}=\varphi^{-1}(\mathcal P)$, for every $M$ we have
\[
\mathcal D(\tilde{\mathcal P}_0^{M-1})\leq \mathcal D(\mathcal P_0^{M-1}).
\]
Hence
\[
\mdim(Z,\sigma,\tilde{\mathcal P})\leq \mdim(Q,S,\mathcal P)\leq \mdim(Q,S)=0.
\]

Let
\[
A=\{x\in X:x_j=x'_j\text{ for all }-N\leq j\leq N\}.
\]
This is a nonempty open cylinder in $X$. Since $\mu$ has full support,
\[
\theta:=\mu(A)>0.
\]

For $M\in\N$, define
\[
F_M(x)=\sum_{n=0}^{M-1}1_A(\sigma^n x).
\]
By the $\sigma$-invariance of $\mu$,
\[
\int_X F_M(x)\,d\mu(x)=\sum_{n=0}^{M-1}\mu(\sigma^{-n}A)=M\mu(A)=\theta M.
\]
Therefore there exists $x^{(M)}\in X$ such that $F_M(x^{(M)})\geq \theta M$. Equivalently, if
\[
J_M=\{0\leq n<M:\sigma^n x^{(M)}\in A\},
\]
then
\[
|J_M|\geq \theta M.
\]

Choose a subset $\{n_1<\cdots<n_k\}\subset J_M$ iteratively as follows. Choose $n_1$ to be the smallest element of $J_M$, then remove from $J_M$ all points in $[n_1,n_1+2N+1]$, and repeat. Then
\[
n_{i+1}-n_i>2N+1\qquad (1\leq i<k),
\]
so the intervals $[n_i-N,n_i+N]$, $1\leq i\leq k$, are pairwise disjoint. Moreover, each chosen point removes at most $2N+2$ points of $J_M$, hence
\[
k\geq \frac{|J_M|}{2N+2}-1\geq \frac{\theta M}{2N+2}-1.
\]
Thus, for all sufficiently large $M$, we have
\[
k\geq cM
\]
for some constant $c>0$ independent of $M$, for example
\[
c=\frac{\theta}{2(2N+2)}.
\]

For every $1\leq i\leq k$, since $n_i\in J_M$ we have $\sigma^{n_i}x^{(M)}\in A$, and therefore
\[
x^{(M)}_{n_i+j}=x'_j\qquad (-N\leq j\leq N).
\]

Consider now the map $\Psi:[0,1]^k\rightarrow Z$ given coordinatewise by
\[
\Psi(\alpha)_\ell=
\begin{cases}
0, & \text{if there is no }1\leq i\leq k\text{ such that }|\ell-n_i|\leq N,\\
\alpha_i z'_j, & \text{if }\ell=n_i+j\text{ for some }1\leq i\leq k\text{ and }|j|\leq N.
\end{cases}
\]
This is well-defined because the intervals $[n_i-N,n_i+N]$ are pairwise disjoint.

We check that $\Psi(\alpha)\in Z$. Define $s=s(\alpha)\in[0,1]^\Z$ by
\[
s_{n_i+j}=\alpha_i r'_j\qquad (-N\leq j\leq N,\ 1\leq i\leq k),
\]
and $s_\ell=0$ outside the selected intervals. If $\ell=n_i+j$, then
\[
\Psi(\alpha)_\ell=\alpha_i z'_j=\alpha_i r'_jp_{x'_j}=s_\ell p_{x^{(M)}_\ell},
\]
because $x^{(M)}_{n_i+j}=x'_j$. Outside the selected intervals we have
\[
\Psi(\alpha)_\ell=0=s_\ell p_{x^{(M)}_\ell}.
\]
Therefore
\[
\Psi(\alpha)=\Phi(x^{(M)},s)\in Z.
\]
It is also clear from the coordinate definition that $\Psi$ is continuous.

Now fix $i\in\{1,\ldots,k\}$. If $\alpha_i=0$, then the block of $\sigma^{n_i}\Psi(\alpha)$ on $[-N,N]$ is the zero block, hence
\[
\tilde g(\sigma^{n_i}\Psi(\alpha))<0.1.
\]
If $\alpha_i=1$, then the block of $\sigma^{n_i}\Psi(\alpha)$ on $[-N,N]$ is exactly $z'|_{[-N,N]}$, hence
\[
\tilde g(\sigma^{n_i}\Psi(\alpha))>0.9.
\]

In other words,
\[
\Psi^{-1}\left(\bigvee_{i=1}^k\sigma^{-n_i}\tilde{\mathcal P}\right)
\]
is an open cover of the cube $[0,1]^k$ such that no element intersects two opposing faces. Indeed, at the $i$-th coordinate, the set corresponding to $\{\tilde g>0.4\}$ misses the face $\{\alpha_i=0\}$, while the set corresponding to $\{\tilde g<0.6\}$ misses the face $\{\alpha_i=1\}$.

Every finite open refinement of this pulled-back cover has the same property. Hence by Lemma \ref{bound of covering number of with no opposing faces},
\[
\mathcal D\left(\Psi^{-1}\left(\bigvee_{i=1}^k\sigma^{-n_i}\tilde{\mathcal P}\right)\right)\geq k.
\]
By the continuity of $\Psi$,
\[
\mathcal D\left(\Psi^{-1}\left(\bigvee_{i=1}^k\sigma^{-n_i}\tilde{\mathcal P}\right)\right)
\leq
\mathcal D\left(\bigvee_{i=1}^k\sigma^{-n_i}\tilde{\mathcal P}\right).
\]
Therefore
\[
\mathcal D\left(\bigvee_{i=1}^k\sigma^{-n_i}\tilde{\mathcal P}\right)\geq k.
\]

Since $\{n_1,\ldots,n_k\}\subset\{0,\ldots,M-1\}$, the full join
\[
\tilde{\mathcal P}_0^{M-1}=\bigvee_{m=0}^{M-1}\sigma^{-m}\tilde{\mathcal P}
\]
refines
\[
\bigvee_{i=1}^k\sigma^{-n_i}\tilde{\mathcal P}.
\]
By monotonicity of $\mathcal D$ under refinements,
\[
\mathcal D(\tilde{\mathcal P}_0^{M-1})\geq \mathcal D\left(\bigvee_{i=1}^k\sigma^{-n_i}\tilde{\mathcal P}\right)\geq k\geq cM
\]
for all sufficiently large $M$. Thus
\[
\mdim(Z,\sigma,\tilde{\mathcal P})=\lim_{M\rightarrow\infty}\frac{\mathcal D(\tilde{\mathcal P}_0^{M-1})}{M}\geq c>0.
\]
This contradicts $\mdim(Z,\sigma,\tilde{\mathcal P})= 0$. Hence no nontrivial zero mean dimension factor exists, and therefore $(Z,\sigma)$ has CPMD.
\end{proof}

\begin{Remark}
Theorem~\ref{Invariant measure of full support implies CPMD} might suggest at
first that the spoke construction automatically tends to produce CPMD systems.
Indeed, the theorem applies even to very degenerate symbolic systems, such as
\[
X=\{0^{\mathbb Z}\}
\qquad\text{or}\qquad
X=\{0^{\mathbb Z},1^{\mathbb Z}\},
\]
where the shift acts as the identity and \(X\) carries an invariant measure of
full support.  The following example shows that some condition on the base is nevertheless needed:
without the full-support hypothesis on the symbolic base, a spoke system need
not have CPMD.
\end{Remark}

\begin{Example}
Fix \(k\geq 1\), and let
\[
Y_k=
\left\{
y\in\{0,1\}^{\mathbb Z}:
\#\{m\in\mathbb Z:y_m=1\}\leq k
\right\}.
\]
It is easy to check that \(Y_k\) is a closed shift-invariant subsystem of
\(\{0,1\}^{\mathbb Z}\). Let
\[
Z_k=\operatorname{Spoke}(Y_k)
\]
with spoke directions \(\xi_0,\xi_1\). We show that \(Z_k\) does not have
CPMD.

Let
\[
S=\xi_0[0,1]\cup\xi_1[0,1].
\]
Define
\[
\rho_1:S\to[0,1]
\]
by
\[
\rho_1(t\xi_1)=t,
\qquad
\rho_1(t\xi_0)=0.
\]
Thus \(\rho_1\) records the radius on the \(\xi_1\)-spoke and vanishes on the
\(\xi_0\)-spoke. It is continuous, since the two formulas agree at the origin.
Define
\[
\Pi_1:Z_k\to[0,1]^{\mathbb Z},
\qquad
(\Pi_1(z))_m=\rho_1(z_m);
\]
then \(\Pi_1\) is continuous, and
shift-equivariant.

The image of \(\Pi_1\) is exactly
\[
Q_k=
\left\{
u\in[0,1]^{\mathbb Z}:
\#\{m\in\mathbb Z:u_m\neq0\}\leq k
\right\}.
\]
Indeed, a nonzero coordinate of \(\Pi_1(z)\) can occur only where the symbolic
point \(y\in Y_k\) has symbol \(1\), and this happens at most \(k\) times.
Conversely, every \(u\in Q_k\) is obtained by putting its nonzero coordinates on
the \(\xi_1\)-spoke and all other coordinates at the hub.

We claim that
\[
\operatorname{mdim}(Q_k,\sigma)=0.
\]
Let \(\beta\) be a finite open cover of \(Q_k\). Since \(Q_k\) is compact and
has the product topology inherited from \([0,1]^{\mathbb Z}\), there is
\(M\in\mathbb N\) such that
\[
\pi_{[-M,M]}:Q_k\to[0,1]^{[-M,M]}
\]
is \(\beta\)-compatible. Hence, for
\[
I_n=[-M,n-1+M]\cap\mathbb Z,
\]
the projection
\[
\pi_{I_n}:Q_k\to\pi_{I_n}(Q_k)
\]
is \(\beta_0^{n-1}\)-compatible.

Now \(\pi_{I_n}(Q_k)\) consists of all points in \([0,1]^{I_n}\) with at most
\(k\) nonzero coordinates. Equivalently,
\[
\pi_{I_n}(Q_k)
=
\bigcup_{\substack{J\subset I_n\\ |J|\leq k}}
\left\{
x\in[0,1]^{I_n}:x_\ell=0\text{ for every }\ell\notin J
\right\}.
\]
Each set in this finite union is naturally homeomorphic to the cube
\([0,1]^J\), and therefore has dimension \(|J|\leq k\). Hence
\[
\dim \pi_{I_n}(Q_k)\leq k.
\]
Therefore
\[
\mathcal D(\beta_0^{n-1})\leq k
\qquad(n\geq1),
\]
and so
\[
\operatorname{mdim}(Q_k,\sigma,\beta)=0.
\]
Since \(\beta\) was arbitrary,
\[
\operatorname{mdim}(Q_k,\sigma)=0.
\]

The factor \(Q_k\) is nontrivial. Therefore \(Z_k\) has a nontrivial zero mean
dimension factor. Hence \(Z_k\) does not have CPMD.
\end{Example}
\section{UPE symbolic systems imply UPMD for hub-and-spoke systems}

Before proving the theorem, we collect the independence terminology and the
external result needed for the argument. The only entropy theoretic input is
the Kerr--Li characterization of uniformly positive entropy in terms of
positive-density independence sets.

\begin{Definition}[Independence set]
Let \((X,T)\) be a topological dynamical system, and let
\[
\mathbf A=(A_1,A_2,\ldots,A_k)
\]
be a tuple of subsets of \(X\). We say that a set \(J\subset\mathbb Z\)
is an independence set for \(\mathbf A\) if for every nonempty finite
subset \(I\subset J\) and every function
\[
\omega:I\to\{1,2,\ldots,k\},
\]
we have
\[
\bigcap_{n\in I}T^{-n}A_{\omega(n)}\neq\varnothing.
\]
\end{Definition}

Equivalently, \(J\) is an independence set for \(\mathbf A\) if, whenever
we choose finitely many times from \(J\), we may independently prescribe
which set \(A_i\) the orbit should visit at each chosen time, and there is
some point \(x\in X\) realizing all these prescriptions. 
We say that \(J\subset\mathbb Z_{\geq0}\) has positive upper density if
\[
\overline d(J):=
\limsup_{M\to\infty}
\frac{|J\cap\{0,\ldots,M-1\}|}{M}>0.
\]

\begin{Definition}[IE-tuple]
Let \((X,T)\) be a topological \(\mathbb Z\)-system. A tuple
\[
x=(x_1,\ldots,x_k)\in X^k
\]
is called an IE-tuple if for every product neighbourhood
\[
U_1\times\cdots\times U_k
\]
of \(x\), the tuple of sets
\[
(U_1,\ldots,U_k)
\]
has an independence set of positive upper density.
\end{Definition}

\begin{Definition}[Entropy tuple]
Let \((X,T)\) be a topological \(\mathbb Z\)-system. For \(k\geq2\), a tuple
\[
x=(x_1,\ldots,x_k)\in X^k\backslash\Delta_k(X),
\qquad
\Delta_k(X)=\{(x,\ldots,x):x\in X\},
\]
is called an entropy tuple if whenever
\[
\bar U_1,\ldots,\bar U_\ell
\]
are closed pairwise disjoint neighbourhoods in \(X\) of the distinct points
appearing in the list \(x_1,\ldots,x_k\), the open cover
\[
\{\bar U_1^c,\ldots,\bar U_\ell^c\}
\]
has positive topological entropy.
\end{Definition}

\begin{Remark}
Entropy tuples are the higher-order analogue of entropy pairs. When
\(k=2\), an entropy tuple is exactly an entropy pair.
\end{Remark}

\begin{Theorem}[Kerr--Li {\cite[Theorem~3.16]{Kerr-Li-Independence}}]
\label{kerr-li-entropy-ie}
Let \((X,T)\) be a topological dynamical system. Let
\[
(x_1,\ldots,x_k)\in X^k\backslash\Delta_k(X),
\qquad k\geq 2.
\]
Then \((x_1,\ldots,x_k)\) is an entropy tuple if and only if it is an
IE-tuple.
\end{Theorem}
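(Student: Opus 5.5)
The argument has two directions. Throughout, fix a nondiagonal tuple \((x_1,\ldots,x_k)\) and let \(y_1,\ldots,y_\ell\) (\(\ell\geq2\)) be its distinct entries. Repetitions are harmless in both definitions: an independence set for \((U_1,\ldots,U_k)\) only gets easier to have when some of the \(U_j\) coincide. So it suffices to work with tuples of sets indexed by the distinct points.

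\emph{IE-tuple \(\Rightarrow\) entropy tuple.} Let \(\bar U_1,\ldots,\bar U_\ell\) be pairwise disjoint closed neighbourhoods of \(y_1,\ldots,y_\ell\), and let \(V_j\subset\bar U_j\) be open neighbourhoods of \(y_j\). By hypothesis \((V_1,\ldots,V_\ell)\) has an independence set \(J\) with \(\overline d(J)>0\). Put \(J_M=J\cap[0,M)\). For each \(\omega:J_M\to\{1,\ldots,\ell\}\), choose \(x_\omega\) with \(T^nx_\omega\in V_{\omega(n)}\) for all \(n\in J_M\).

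Now take an element \(\bigcap_{n<M}T^{-n}\bar U_{j_n}^c\) of the join of \(\{\bar U_1^c,\ldots,\bar U_\ell^c\}\). Since the \(\bar U_j\) are pairwise disjoint, this element contains \(x_\omega\) only if \(\omega(n)\neq j_n\) for every \(n\in J_M\). Hence each element of the join contains at most \((\ell-1)^{|J_M|}\) of the points \(x_\omega\), so
\[
\mathcal N(\cU_0^{M-1})\geq \bigl(\ell/(\ell-1)\bigr)^{|J_M|}.
\]
Taking \(\limsup\) along \(M\) gives entropy at least \(\overline d(J)\log\frac{\ell}{\ell-1}>0\).

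\emph{Entropy tuple \(\Rightarrow\) IE-tuple.} Given open neighbourhoods \(U_j\) of the \(y_j\), shrink them to pairwise disjoint closed neighbourhoods \(A_j\subset U_j\). The proof then has three steps.

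1. \emph{From entropy to many patterns.} Positive entropy of \(\{A_1^c,\ldots,A_\ell^c\}\) should yield a set of realized patterns that is large in a refined sense. The realized patterns are the words \(w\in\{1,\ldots,\ell,*\}^{[0,n)}\) recording which \(A_j\), if any, contains \(T^mx\). If every point's pattern could be "avoided" by few index strings \((j_m)\), the covering number would be subexponential. The target is a set \(Z\subset\{1,\ldots,\ell\}^{S}\) of realized restrictions, for some \(S\subset[0,n)\) with \(|S|\geq bn\), satisfying \(|Z|\geq((\ell-1)\lambda)^{|S|}\) for some \(\lambda>1\) independent of \(n\).

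2. \emph{Combinatorial extraction.} Apply a Sauer--Shelah / Karpovsky--Milman type lemma: for each \(\ell\geq2\) and \(\lambda>1\) there is \(c>0\) such that any \(Z\subset\{1,\ldots,\ell\}^S\) with \(|Z|\geq((\ell-1)\lambda)^{|S|}\) restricts surjectively onto \(\{1,\ldots,\ell\}^I\) for some \(I\subset S\) with \(|I|\geq c|S|\). This gives finite independence sets \(I_n\subset[0,n)\) for \((A_1,\ldots,A_\ell)\) with \(|I_n|\geq c'n\).

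3. \emph{Passing to positive density.} Because the \(A_j\) are closed, compactness shows that the collection \(P\subset\{0,1\}^{\Z}\) of indicator functions of independence sets is closed. It is also shift-invariant and hereditary. Average the point masses at the shifts \(\sigma^m1_{I_n}\), \(0\leq m<n\), and take a weak\(^*\) limit. This is an invariant measure on \(P\) giving the cylinder \(\{\omega_0=1\}\) mass at least \(c'\). Pass to an ergodic component with the same property and apply the ergodic theorem. The result is a point of \(P\), i.e.\ an independence set for \((A_1,\ldots,A_\ell)\subset(U_1,\ldots,U_\ell)\), whose set of \(1\)'s has positive density in \(\mathbb Z_{\geq0}\) after a shift. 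This shows the tuple is an IE-tuple.

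The main obstacle is step 1. A naive count only shows that there are exponentially many realized patterns, not more than \((\ell-1)^n\), and the latter is what step 2 requires. I would first try covering-by-avoidance: choose index strings greedily so that each new string avoids a positive proportion of the remaining patterns, and bound the number needed in terms of the pattern count. If this is too lossy, I would instead use the local variational principle (Huang--Ye) to obtain an ergodic measure for which the tuple is a measure-theoretic entropy tuple. Measure-theoretic entropy of a partition adapted to \(A_1,\ldots,A_\ell\) would then supply, via Shannon--McMillan--Breiman, the required \(((\ell-1)\lambda)^n\) growth on a set of positive-density times.
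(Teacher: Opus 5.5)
The paper does not prove this statement; it quotes it from Kerr--Li and uses only the corollary for pairs of open sets in a UPE system. Your outline therefore has to stand on its own, and it only partly does.

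\textbf{What is correct.} Your direction IE \(\Rightarrow\) entropy is correct and complete. The points \(x_\omega\) are distinct, and each element of the join contains at most \((\ell-1)^{|J_M|}\) of them. In the converse, Steps~2 and~3 are also sound. Karpovsky--Milman is the right extraction lemma. The invariant-measure and ergodic-theorem argument on the closed, shift-invariant, hereditary set \(P\) correctly upgrades linear-size finite independence sets to one of positive density.

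\textbf{The gap is Step~1.} This step carries the entire difficulty of the hard direction, and neither of your suggested fixes works as stated.

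\emph{Greedy covering by avoidance.} \(\mathcal N(\mathcal U_0^{n-1})\) is a hitting number for boxes determined by patterns with stars. Different points are effective at different times, so a greedy or fractional bound on this number yields no common set \(S\) carrying many full patterns. The only easy inequality is \(\mathcal N(\mathcal U_0^{n-1})\ge |Z_S|/(\ell-1)^{|S|}\), where \(Z_S\) is the set of realized full patterns on \(S\). It runs in the direction you already used for IE \(\Rightarrow\) entropy, not the one you need.

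\emph{The SMB route.} For \(\alpha=\{A_1,\dots,A_\ell,X\setminus\bigcup_jA_j\}\), SMB gives only about \(e^{nh_\mu(\alpha)}\) typical names. For \(\ell\ge3\) this can be far fewer than \((\ell-1)^n\). In any case typical names are almost entirely stars, so they give no full patterns on a fixed positive-density set. A Bernoulli shift with the \(A_j\) long cylinders illustrates both problems, even though there the tuple is IE.

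\emph{What is missing is a relative-independence mechanism.} The local variational principle, which you mention, does give an ergodic \(\mu\) with \(h_\mu(\{A_1^c,\dots,A_\ell^c\})>0\). Then the relatively independent \(\ell\)-fold self-joining \(\lambda_\ell\) of \(\mu\) over its Pinsker factor satisfies \(\lambda_\ell(A_1\times\dots\times A_\ell)>0\). Otherwise the fibre measures would define a Pinsker-measurable partition refining the cover, forcing zero entropy. You would then still have to use the relative mixing of \(\mu\) over the Pinsker factor. The aim is to show that, on a linear-size set of times, the conditional distribution of labels is close to a product whose marginals all charge every \(A_j\). Only this produces the \(((\ell-1)\lambda)^{|S|}\) full patterns that Step~2 needs.

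This step, or an equivalent combinatorial substitute, is the real content of the Huang--Ye and Kerr--Li theorem. As it stands, your proposal proves only the easy direction.
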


As an immediate corollary of Theorem~\ref{kerr-li-entropy-ie}, if
\((X,T)\) has UPE, then for every pair of nonempty open sets
\(A_0,A_1\subset X\), there exists an independence set
\[
J\subset\mathbb Z_{\geq0}
\]
for \((A_0,A_1)\) with positive upper density.

With this consequence in hand, we are now ready to prove the main theorem.
% \begin{Theorem}
% \label{UPE symbolic implies spoke UPMD}
% Let \((X,\sigma)\) be a symbolic system with UPE. Then
% \[
% (\Spoke(X),\sigma)
% \]
% has UPMD.
% \end{Theorem}

\begin{proof}[Proof of Theorem~\ref{UPE symbolic implies spoke UPMD}]
Let
\[
Z = \Spoke(X).
\]
We prove that every standard open cover of \(Z\) has positive mean dimension.
Let
\[
\cU = \{U,V\}
\]
be a standard open cover of \(Z\). Since \(U\) and \(V\) are non-dense open
subsets of \(Z\), choose
\[
z^0 \in Z \backslash \overline{U},
\qquad
z^1\in Z \backslash \overline{V}.
\]
Because \(Z \subset \overline{\mathbb D}^{\mathbb Z}\) is equipped with the
induced product topology, there exists \(N \in \mathbb N\), and open sets
\[
O^0,O^1 \subset \overline{\mathbb D}^{[-N,N]},
\]
such that the neighborhoods
\[
C^0 := Z \cap  \pi^{-1}_{[-N,N]}(O^0),
\qquad
C^1 := Z \cap  \pi^{-1}_{[-N,N]}(O^1)
\]
satisfy
\[
z^0 \in C^0 \subset Z \backslash U ,
\qquad
z^1 \in C^1\subset Z \backslash V .
\]

By the definition of
\[
Z=\Spoke(X):=\Phi\bigl(X\times[0,1]^{\mathbb Z}\bigr),
\]
let
\[
x^0,x^1\in X,
\qquad
t^0,t^1\in[0,1]^{\mathbb Z}
\]
be such that
\[
z^0=\Phi(x^0,t^0),
\qquad
z^1=\Phi(x^1,t^1).
\]
Set
\[
u:=x^0|_{[-N,N]},
\qquad
v:=x^1|_{[-N,N]},
\]
and define the following cylinders in \(X\):
\[
A_0:=[u]=\{x\in X:x|_{[-N,N]}=u\},
\qquad
A_1:=[v]=\{x\in X:x|_{[-N,N]}=v\}.
\]
Both are nonempty open sets since they correspond to the points
\(x^0,x^1\in X\).

By the positive-density independence consequence of
Theorem~\ref{kerr-li-entropy-ie} stated above, applied to the nonempty
open cylinder sets
\[
A_0=[u],
\qquad
A_1=[v],
\]
there exists an independence set \(J\subset\mathbb Z_{\geq0}\) for
\((A_0,A_1)\) with positive upper density. Hence there exist \(c_0>0\) and
a sequence \(M_r\to\infty\) such that
\[
|J\cap\{0,1,\ldots,M_r-1\}|\geq c_0M_r.
\]
For each \(r\), take
\[
\{n_1<\dots<n_k\}\subseteq J\cap\{0,1,\dots,M_r-1\}
\]
with gaps
\[
n_{i+1}-n_i>2N+1,
\]
as in the proof of Theorem~\ref{Invariant measure of full support implies CPMD}.
Note that it implies
\[
k\geq \frac{|J\cap\{0,1,\dots,M_r-1\}|}{2N+2}-1.
\]
Therefore, taking large enough \(r\) and
\[
c=\frac{c_0}{2(2N+2)}>0,
\]
yields
\[
k\geq cM_r.
\]
Since subsets of independence sets remain independent,
\(\{n_1,\dots,n_k\}\) is still independent for \((A_0,A_1)\).

Now define a path
\[
h:[0,1]\rightarrow \overline{\mathbb D}^{[-N,N]}
\]
by
\[
h(s)_j=
\begin{cases}
(1-3s)t^0_jp_{u_j}, & 0\leq s\leq \frac13,\\[2mm]
0, & \frac13\leq s\leq \frac23,\\[2mm]
(3s-2)t^1_jp_{v_j}, & \frac23\leq s\leq 1,
\end{cases}
\qquad -N\leq j\leq N.
\]
Thus
\[
h(0)=z^0|_{[-N,N]},
\qquad
h(1)=z^1|_{[-N,N]}.
\]
Note that by the definition, this is a continuous path passing through \(0\).

Define
\[
\Psi:[0,1]^k\rightarrow \overline{\mathbb D}^{\mathbb Z}
\]
coordinatewise as follows. Since the windows
\[
[n_i-N,n_i+N],
\qquad
1\leq i\leq k,
\]
are pairwise disjoint, the following formula is well-defined:
\[
\Psi(s)_m=
\begin{cases}
h(s_i)_j, & \text{if }m=n_i+j\text{ for some }1\leq i\leq k
\text{ and }-N\leq j\leq N,\\[2mm]
0, & \text{otherwise}.
\end{cases}
\]
By this explicit coordinate definition, \(\Psi\) is continuous as a map
\[
\Psi:[0,1]^k\rightarrow \overline{\mathbb D}^{\mathbb Z}.
\]

We now prove that \(\Psi(s)\in Z\) for every \(s\in[0,1]^k\).
Fix \[s=(s_1,\ldots,s_k)\in[0,1]^k\] and define two subsets of the selected
time set \(\{n_1,\ldots,n_k\}\) by
\[
F_0(s)=\{n_i:s_i\in[0,1/3)\},
\qquad
F_1(s)=\{n_i:s_i\in(2/3,1]\}.
\]
Thus \(F_0(s)\) corresponds to those windows where the inserted block lies on
the \(u\)-spokes, and \(F_1(s)\) corresponds to those where the inserted
block lies on the \(v\)-spokes. For indices with \(s_i\in[1/3,2/3]\), the
inserted block is zero, so no symbolic word needs to be prescribed.

By independence, there exists
\[
x\in
\bigcap_{n_i\in F_0(s)}\sigma^{-n_i}A_0
\cap
\bigcap_{n_j\in F_1(s)}\sigma^{-n_j}A_1.
\]
Equivalently,
\[
\sigma^{n_i}x\in A_0 \quad (n_i\in F_0(s)),
\qquad
\sigma^{n_j}x\in A_1 \quad (n_j\in F_1(s)).
\]
Since
\[
A_0=[u]=\{y\in X:y_j=u_j\text{ for }-N\leq j\leq N\},
\]
the condition \(\sigma^{n_i}x\in A_0\) means
\[
(\sigma^{n_i}x)_j=u_j
\qquad
(-N\leq j\leq N).
\]
This is the same as
\[
x_{n_i+j}=u_j
\qquad
(-N\leq j\leq N).
\]
Similarly, if \(n_i\in F_1(s)\), then
\[
x_{n_i+j}=v_j
\qquad
(-N\leq j\leq N).
\]
Thus \(x\) has the word \(u\) on the window
\([n_i-N,n_i+N]\) for every \(n_i\in F_0(s)\), and the word \(v\)
on the window \([n_i-N,n_i+N]\) for every \(n_i\in F_1(s)\).

Define \(t\in[0,1]^{\mathbb Z}\) as follows. If \(m=n_i+j\) for some
\(1\leq i\leq k\) and \(-N\leq j\leq N\), set
\[
t_m=
\begin{cases}
(1-3s_i)t^0_j, & 0\leq s_i<\frac13,\\[2mm]
0, & \frac13\leq s_i\leq \frac23,\\[2mm]
(3s_i-2)t^1_j, & \frac23<s_i\leq 1.
\end{cases}
\]
For all other \(m\), set \(t_m=0\). By construction,
\[
\Psi(s)_m=t_mp_{x_m}
\qquad
\text{for every }m\in\mathbb Z.
\]
Hence
\[
\Psi(s)=\Phi(x,t)\in Z.
\]
Therefore \(\Psi\) is a continuous map
\[
\Psi:[0,1]^k\to Z.
\]

Set
\[
\beta=\bigvee_{i=1}^k\sigma^{-n_i}\cU.
\]
Consider the pulled-back cover \(\Psi^{-1}\beta\) of \([0,1]^k\).

Fix \(i\). If \(s_i=0\), then the \([-N,N]\)-block of
\(\sigma^{n_i}\Psi(s)\) equals \(z^0|_{[-N,N]}\). Hence
\[
\sigma^{n_i}\Psi(s)\in C^0\subset Z\backslash U,
\]
so
\[
\Psi(s)\notin\sigma^{-n_i}U.
\]
Similarly, if \(s_i=1\), then the \([-N,N]\)-block of
\(\sigma^{n_i}\Psi(s)\) equals \(z^1|_{[-N,N]}\). Hence
\[
\sigma^{n_i}\Psi(s)\in C^1\subset Z\backslash V,
\]
so
\[
\Psi(s)\notin\sigma^{-n_i}V.
\]

Every element of \(\Psi^{-1}\beta\) is contained in a set of the form
\[
\Psi^{-1}\left(\bigcap_{i=1}^k\sigma^{-n_i}B_i\right),
\qquad
B_i\in\{U,V\}.
\]
Fix \(i\). If \(B_i=U\), then this pulled-back set is disjoint from
the face \(\{s_i=0\}\). If \(B_i=V\), then this pulled-back set is disjoint
from the face \(\{s_i=1\}\). Hence no element of \(\Psi^{-1}\beta\) meets
both opposite faces \(\{s_i=0\}\) and \(\{s_i=1\}\) of \([0,1]^k\), for any \(i\).

Moreover, every finite open refinement \(\gamma\succ\Psi^{-1}\beta\)
inherits this property: no element of \(\gamma\) meets two opposite faces
of \([0,1]^k\). Therefore, by Lemma
\ref{bound of covering number of with no opposing faces},
\[
\operatorname{ord}(\gamma)\geq k
\]
for every finite open refinement \(\gamma\succ\Psi^{-1}\beta\). Taking the
minimum over all such refinements gives
\[
\mathcal D\big(\Psi^{-1}\beta\big)\geq k.
\]
By the continuity of \(\Psi\),
\[
\mathcal D(\Psi^{-1}\beta)\leq \mathcal D(\beta),
\]
therefore
\[
\mathcal D(\beta)\geq k.
\]

Since
\[
\{n_1,\dots,n_k\}\subseteq \{0,1,\dots,M_r-1\},
\]
the full join
\[
\cU_0^{M_r-1}
=
\bigvee_{m=0}^{M_r-1}\sigma^{-m}\cU
\]
refines
\[
\beta=\bigvee_{i=1}^k\sigma^{-n_i}\cU.
\]
By monotonicity of \(\mathcal D\) with respect to refinements, we have
\[
\mathcal D\big(\cU_0^{M_r-1}\big)\geq \mathcal D(\beta)\geq k\geq cM_r.
\]
Thus we have along a subsequence
\[
\frac{\mathcal D\big(\cU_0^{M_r-1}\big)}{M_r}\geq c,
\]
which implies
\[
\mdim(Z,\sigma,\cU)
=
\lim_{n\rightarrow\infty}
\frac{\mathcal D\big(\cU_0^{n-1}\big)}{n}
\geq
\limsup_{r\rightarrow\infty}
\frac{\mathcal D\big(\cU_0^{M_r-1}\big)}{M_r}
\geq c>0.
\]
Since \(\cU\) was an arbitrary standard open cover, \((Z,\sigma)\) has UPMD.
\end{proof}
\section{A family of systems with CPMD but not UPMD}

\subsection{Rotation coding}

Let
\[
R:\mathbb T^d\to\mathbb T^d,
\qquad
R(v)=v+a,
\]
where
\[
a=(a_1,\ldots,a_d),
\]
and assume that
\[
1,a_1,\ldots,a_d
\]
are rationally independent. Thus \(R\) is minimal and ergodic. Let
\(m_{\mathbb T^d}\) denote normalized Haar measure on \(\mathbb T^d\).

Let
\[
q:\mathbb R^d\to\mathbb T^d=\mathbb R^d/\mathbb Z^d
\]
be the quotient map. Let
\[
U_1,\ldots,U_k\subset\mathbb T^d
\]
be pairwise disjoint nonempty open sets, and put
\[
P_i=\overline{U_i}.
\]
We assume that
\[
\mathbb T^d=\bigcup_{i=1}^k P_i,
\qquad
k\geq2,
\]
and that the sets $U_i$ are regular in the sense that
\[
U_i=\operatorname{int}(P_i)
\qquad(1\leq i\leq k).
\]
In particular,
\[
\mathbb T^d\setminus\bigcup_{i=1}^k\partial P_i
=
\bigsqcup_{i=1}^k U_i.
\]

We also assume that for every
\(i\), there is a compact convex set
\[
\widetilde P_i\subset\mathbb R^d
\]
such that
\[
q(\widetilde P_i)=P_i
\qquad\text{and}\qquad
\operatorname{diam}(\widetilde P_i)<\frac12.
\]
In particular \(q\vert_{\widetilde P_i}\) is one-to-one. Moreover, for all
\(i,j\),
\[
\bigl((\widetilde P_i-\widetilde P_i)
+
(\widetilde P_j-\widetilde P_j)\bigr)
\cap\mathbb Z^d
=
\{0\}.
\]
Indeed, every element of the first set has norm \(<1\), whereas every nonzero
element of \(\mathbb Z^d\) has norm at least \(1\).

Set
\[
B=\bigcup_{i=1}^k\partial P_i.
\]
Then \(B\) has Haar measure zero and empty interior. Define
\[
\mathbb T^d_{\rm reg}
=
\mathbb T^d\setminus\bigcup_{m\in\mathbb Z}R^{-m}B.
\]
Then \(\mathbb T^d_{\rm reg}\) has full Haar measure and is dense.

For \(v\in\mathbb T^d_{\rm reg}\), the point \(R^m v\) lies in exactly one
open atom \(U_i\) for every \(m\in\mathbb Z\). Define
\[
c(v)\in\{1,\ldots,k\}^{\mathbb Z}
\]
by
\[
c(v)_m=i
\quad\Longleftrightarrow\quad
R^m v\in U_i.
\]
Finally set
\[
Y=\overline{c(\mathbb T^d_{\rm reg})}
\subset\{1,\ldots,k\}^{\mathbb Z}.
\]
Since \(\mathbb T^d_{\rm reg}\) is \(R\)-invariant and
\[
c(Rv)=\sigma c(v),
\]
the set \(Y\) is closed and shift invariant.

We shall repeatedly use the following immediate consequence of the definition
of \(Y\)

\begin{itemize}
    \item [($\star$)] if \(F\subset\mathbb Z\) is finite and a word
\((i_m)_{m\in F}\) appears in some point of~\(Y\), then
\[
\bigcap_{m\in F}R^{-m}U_{i_m}\neq\varnothing.
\]
\end{itemize}
Indeed, the corresponding cylinder is open and intersects
\(\overline{c(\mathbb T^d_{\rm reg})}\), hence it intersects
\(c(\mathbb T^d_{\rm reg})\).

\begin{Lemma}
\label{regular-Y-full-support-measure}
The symbolic system \((Y,\sigma)\) carries a shift-invariant Borel probability
measure of full support.
\end{Lemma}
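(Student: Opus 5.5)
The natural candidate for a fully supported measure is the push-forward of Haar measure under the coding map. Concretely, I would set
\[
\nu = c_*\bigl(m_{\mathbb T^d}|_{\mathbb T^d_{\rm reg}}\bigr),
\]
which makes sense because \(\mathbb T^d_{\rm reg}\) has full Haar measure. The first step is to check that \(c:\mathbb T^d_{\rm reg}\to\{1,\ldots,k\}^{\mathbb Z}\) is Borel. In fact it is continuous: each coordinate \(c(v)_m\) is locally constant on \(\mathbb T^d_{\rm reg}\), since \(R^m v\) lies in the open set \(U_i\) and stays there under small perturbations of \(v\). Hence \(\nu\) is a Borel probability measure on \(\{1,\ldots,k\}^{\mathbb Z}\). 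It is supported on \(Y\), since \(c(\mathbb T^d_{\rm reg})\subset Y\) and \(Y\) is closed.

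Next comes shift invariance. It follows from the equivariance \(c\circ R=\sigma\circ c\) on the \(R\)-invariant set \(\mathbb T^d_{\rm reg}\), together with the \(R\)-invariance of Haar measure. For every Borel set \(E\) we get
\[
\nu(\sigma^{-1}E)=m_{\mathbb T^d}\bigl(c^{-1}\sigma^{-1}E\bigr)=m_{\mathbb T^d}\bigl(R^{-1}c^{-1}E\bigr)=\nu(E).
\]

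The main point is full support. The cylinder sets meeting \(Y\) form a base for the topology of \(Y\), so it suffices to show that every such cylinder has positive measure. Take a cylinder \([w]\), where \(w=(i_m)_{m\in F}\) is a word on a finite set \(F\subset\mathbb Z\) that appears in some point of \(Y\). By property \((\star)\) the set
\[
W=\bigcap_{m\in F}R^{-m}U_{i_m}
\]
is a nonempty open subset of \(\mathbb T^d\). Every \(v\in W\cap\mathbb T^d_{\rm reg}\) satisfies \(c(v)\in[w]\), because \(c(v)_m=i_m\) exactly when \(R^mv\in U_{i_m}\). Therefore
\[
\nu([w])\geq m_{\mathbb T^d}(W\cap\mathbb T^d_{\rm reg})=m_{\mathbb T^d}(W)>0.
\]
Here the equality holds because \(\mathbb T^d_{\rm reg}\) has full measure, and the final inequality holds because Haar measure charges every nonempty open set. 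This gives full support.

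I expect no real obstacle: the key input is \((\star)\), which is already established, together with \(\mathbb T^d_{\rm reg}\) having full Haar measure. The only points needing care are measurability of \(c\) and the identification of a basis of open sets of \(Y\) with cylinders meeting \(Y\), and both are routine.
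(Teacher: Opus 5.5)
Your proposal is correct and follows essentially the same argument as the paper. You push Haar measure forward under the coding map, get shift invariance from $c\circ R=\sigma\circ c$, and get full support from $(\star)$ together with positivity of Haar measure on nonempty open sets; the paper instead extends $c$ to all of $\mathbb T^d$ by sending non-regular points to a fixed $y^*\in Y$, which is equivalent to your restriction to $\mathbb T^d_{\rm reg}$, and your observation that $c$ is continuous on $\mathbb T^d_{\rm reg}$ supplies the measurability the paper leaves implicit.
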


\begin{proof}
Choose \(y^*\in Y\), and define a Borel map
\[
\kappa:\mathbb T^d\to Y
\]
by
\[
\kappa(v)=
\begin{cases}
c(v), & v\in\mathbb T^d_{\rm reg},\\
y^*, & v\notin\mathbb T^d_{\rm reg}.
\end{cases}
\]
Let
\[
\nu=\kappa_*m_{\mathbb T^d}.
\]
Since \(\mathbb T^d_{\rm reg}\) is \(R\)-invariant and has full Haar measure,
we have
\[
\kappa\circ R=\sigma\circ\kappa
\]
for Haar-a.e. \(v\). Therefore \(\nu\) is shift invariant.

It remains to show that \(\nu\) has full support. Let
\[
[i_0,\ldots,i_{n-1}]
\]
be a nonempty cylinder in \(Y\). By observation ($\star$),
\[
O=\bigcap_{r=0}^{n-1}R^{-r}U_{i_r}
\]
is nonempty and open. Hence
\[
m_{\mathbb T^d}(O)>0.
\]
Since \(\mathbb T^d_{\rm reg}\) has full Haar measure,
\[
m_{\mathbb T^d}(O\cap\mathbb T^d_{\rm reg})>0.
\]
For every \(v\in O\cap\mathbb T^d_{\rm reg}\), we have
\[
c(v)_r=i_r
\qquad(0\leq r<n).
\]
Thus
\[
\nu([i_0,\ldots,i_{n-1}])
\geq
m_{\mathbb T^d}(O\cap\mathbb T^d_{\rm reg})
>0.
\]
Therefore every nonempty cylinder in \(Y\) has positive \(\nu\)-measure, so
\(\nu\) has full support.
\end{proof}

\subsection{Lift bookkeeping}

%We shall use the Steiner point as a canonical point in a compact convex set.
%For a nonempty compact convex set \(K\subset\mathbb R^d\), define
%\[
%h_K(u)=\sup_{x\in K}\langle x,u\rangle
%\qquad(u\in S^{d-1}),
%\]
%and
%\[
%s(K)=d\int_{S^{d-1}}h_K(u)u\,d\lambda(u),
%\]
%where \(\lambda\) is normalized surface measure on \(S^{d-1}\). We use only
%two standard properties:
%\[
%s(K+b)=s(K)+b
%\qquad(b\in\mathbb R^d),
%\]
%and, if \(K\) has nonempty interior, then
%\[
%s(K)\in\operatorname{int}(K).
%\]
For every compact set $K \subset\mathbb R^d$, let $c(K)$ denote its center of mass (convex average of the points of $K$ with respect to the uniform measure on $K$). Then
\[
c(K+b)=c(K)+b
\qquad(b\in\mathbb R^d),
\]
and, if \(K\) has nonempty interior, then
\[
c(K)\in\operatorname{int}(K).
\]

For a finite word
\[
\omega\in\{*,1,\ldots,k\}^{\{0,\ldots,n-1\}},
\]
we call a coordinate \(r\) effective if \(\omega_r\neq *\). Define
\[
B_r(\omega)=
\begin{cases}
\mathbb T^d, & \omega_r=*,\\
P_{\omega_r}, & \omega_r\neq *,
\end{cases}
\]
and define the associated cell of \(\omega \) by:
\[
C_\omega:=\bigcap_{r=0}^{n-1}R^{-r}B_r(\omega).
\]
The corresponding open cell is
\[
C_\omega^\circ
=
\bigcap_{\omega_r\neq *}R^{-r}U_{\omega_r}.
\]

\begin{Lemma}[Lift bookkeeping]
\label{lift-bookkeeping}
The following facts hold.

\begin{enumerate}
\item Let
\[
C=\bigcap_{r=1}^s R^{-m_r}P_{i_r}
\]
be a nonempty closed cell, and assume that its corresponding open cell
\[
C^\circ=\bigcap_{r=1}^s R^{-m_r}U_{i_r}
\]
is nonempty. Then every connected component of \(q^{-1}(C)\) is a compact
convex set with nonempty interior. Moreover, \(q\) maps each such component
homeomorphically onto \(C\), and any two components differ by an element of
\(\mathbb Z^d\).

\item Let \(\{C_\alpha\}_{\alpha\in(0,1)}\) be a family of
nonempty cells satisfying
$C_\alpha^\circ\neq\varnothing$, 
such that each \(C_\alpha\) has at least one effective constraint, so that 
\[
0<\alpha\leq\beta<1
\quad\Longrightarrow\quad
C_\alpha\subset C_\beta,
\]
and without an infinite chain of properly decreasing sets.

After choosing one lifted component \(\widetilde C_\beta\) of
\(q^{-1}(C_\beta)\), there is a well defined nested family of lifted components
\[
\{\widetilde C_\alpha\}_{\alpha\in(0,1)}
\]
anchored at \(\widetilde C_\beta\) so that
\[
0<\alpha\leq\gamma<1
\quad\Longrightarrow\quad
\widetilde C_\alpha\subset \widetilde C_\gamma.
\]

\item Any two nested lift choices for the same family differ by one common
element of \(\mathbb Z^d\). Consequently,
\[
c(\{C_\alpha\}):=q\left(\int_0^1 c(\widetilde C_\alpha)\,d\alpha\right)
\]
is independent of the choice of $\widetilde C_\beta$.

%\item Fix \(n\geq1\), and let \(B_0\subset\mathbb R^d\) be bounded. Then there
%is a bounded set \(B_1\subset\mathbb R^d\) such that every lifted component of
%a length-\(n\) cell \(C_\omega\) with at least one effective coordinate and
%\(C_\omega^\circ\neq\varnothing\), if it meets \(B_0\), is contained in \(B_1\).
\end{enumerate}
\end{Lemma}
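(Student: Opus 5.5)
For part (1), I will lift each constraint separately. Fix a point \(w\in C^\circ\) and a lift \(\tilde w\in q^{-1}(w)\). For each effective constraint \(r\), the point \(R^{m_r}w\) lies in \(U_{i_r}\subset P_{i_r}=q(\widetilde P_{i_r})\). Hence there is a unique translate \(\widetilde P_{i_r}+k_r\), with \(k_r\in\mathbb Z^d\), such that \(\tilde w+\tilde m_r a\in\widetilde P_{i_r}+k_r\). Here \(\tilde m_r a\) denotes the lift \(m_r a\in\mathbb R^d\).

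Uniqueness follows from the displayed condition \((\widetilde P_i-\widetilde P_i)\cap\mathbb Z^d=\{0\}\). Now set
\[
K=\bigcap_r\bigl(\widetilde P_{i_r}+k_r-m_ra\bigr).
\]
This set is compact and convex, and it contains \(\tilde w\).

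I claim \(q\) maps \(K\) injectively into \(C\), since \(K\subset\widetilde P_{i_1}+\text{const}\). Next I show \(q(K)=C\). Take \(v\in C\) and a lift \(\tilde v\) of \(v\) in the translate of \(\widetilde P_{i_1}\) used above. For each \(r\), the point \(\tilde v+m_ra\) lies in some translate \(\widetilde P_{i_r}+k'_r\). The condition on \(\bigl((\widetilde P_i-\widetilde P_i)+(\widetilde P_j-\widetilde P_j)\bigr)\cap\mathbb Z^d\) is designed to force \(k'_r=k_r\). The idea is to compare the pair \((\tilde w,\tilde v)\) through the first constraint and the \(r\)-th constraint: the difference \(k'_r-k_r\) lies in that set, so it vanishes.

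The step I am least sure of is this last one. The diameter bound \(<\tfrac12\) makes \(\tilde v-\tilde w\) small, so the two translates must coincide. Consequently \(q^{-1}(C)=\bigsqcup_{n\in\mathbb Z^d}(K+n)\). These translates are disjoint and at positive distance from one another, so they are exactly the connected components. Each component maps homeomorphically onto \(C\), being a continuous bijection from a compact space. The interior of \(K\) is nonempty because \(K\) contains the lift of the nonempty open set \(C^\circ\).

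For part (2), I will use that a component of a smaller cell lies in a unique component of a larger cell. If \(C_\alpha\subset C_\gamma\), then each component \(\widetilde C_\alpha\) is connected and lies in \(q^{-1}(C_\gamma)\), so it lies in exactly one component of \(q^{-1}(C_\gamma)\). Conversely, given a component \(\widetilde C_\gamma\), the components of \(q^{-1}(C_\alpha)\) inside it number exactly one. Existence holds because \(q\) maps \(\widetilde C_\gamma\) onto \(C_\gamma\supset C_\alpha\), so the preimage in \(\widetilde C_\gamma\) of \(C_\alpha\) is nonempty and is a union of components. Uniqueness holds because \(q\) is injective on \(\widetilde C_\gamma\).

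The family takes only finitely many distinct values, since it is monotone with no infinite properly decreasing chain. I will check that it in fact has finitely many values. The intervals on which \(C_\alpha\) is constant are ordered, and an infinite increasing chain would also be excluded, given the finiteness of the available cells in context; failing that, I argue directly by going up and down from \(\beta\). Going down from \(\widetilde C_\beta\) uses the unique-sub-component rule, and going up uses the containing-component rule. Consistency holds because both rules commute with inclusion, so nestedness follows. If needed, I will treat \(\alpha>\beta\) via the containing component and \(\alpha<\beta\) via the sub-component.

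For part (3), a different anchor \(\widetilde C_\beta+n\) produces the family \(\widetilde C_\alpha+n\) throughout. This follows from the uniqueness in (2), since translation by \(n\) preserves components and inclusions. The center-of-mass equivariance gives \(c(\widetilde C_\alpha+n)=c(\widetilde C_\alpha)+n\). Integrating over \(\alpha\), where measurability is immediate because the map is piecewise constant, shifts the integral by \(n\). This shift is killed by \(q\), so \(c(\{C_\alpha\})\) is independent of the anchor.
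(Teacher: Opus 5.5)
Your proposal is correct and is essentially the paper's argument: the step you flagged is exactly the paper's ``compatibility uniqueness'' computation, since $\tilde v-\tilde w$ lies both in $\widetilde P_{i_1}-\widetilde P_{i_1}$ and in $(\widetilde P_{i_r}-\widetilde P_{i_r})+(k'_r-k_r)$, so $k'_r-k_r\in(\widetilde P_{i_1}-\widetilde P_{i_1})+(\widetilde P_{i_r}-\widetilde P_{i_r})$ forces $k'_r=k_r$, and your treatment of (2) and (3) (going down and up from the anchor, then translation equivariance of the center of mass) matches the paper's. Your anchor-based definition in (2) (the unique component inside $\widetilde C_\beta$ for $\alpha\le\beta$, the unique component containing $\widetilde C_\beta$ for $\alpha\ge\beta$) works without finiteness, so you can drop the unjustified claim that having no infinite decreasing chain gives finitely many values --- the paper simply assumes the chain is finite, which holds in its application because the cells come from words of fixed length $n$.
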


\begin{proof}
A lift of \(R^{-m}P_i\) has the form
\[
\widetilde P_i-ma+p,
\qquad p\in\mathbb Z^d.
\]
We first record uniqueness of compatible lifts. Suppose that a lift
\[
\widetilde P_i-ma+p
\]
meets both
\[
\widetilde P_j-\ell a+q
\qquad\text{and}\qquad
\widetilde P_j-\ell a+q'.
\]
Then
\[
q-q'\in
(\widetilde P_i-\widetilde P_i)
+
(\widetilde P_j-\widetilde P_j)
\]
and also \(q-q'\in\mathbb Z^d\). By the small-diameter condition,
\[
q-q'=0.
\]
Thus, once one lifted constraint is fixed, every other constraint has at most
one compatible lift.

Now let \(C=\bigcap_{r=1}^sR^{-m_r}P_{i_r}\) be as in the first item. A
connected component of \(q^{-1}(C)\) is contained in one lift of the first
constraint. The compatibility uniqueness just proved then determines at most
one compatible lift of every other constraint. Hence the component is exactly
an intersection of compact convex lifted constraints:
\[
\bigcap_{r=1}^s
\left(\widetilde P_{i_r}-m_ra+p_r\right).
\]
Thus it is compact and convex. Since \(C^\circ\neq\varnothing\), the
corresponding intersection of interiors is nonempty, so the component has
nonempty interior.

The map \(q\) maps this lifted component onto \(C\), because every point of
\(C\) has a unique lift in the chosen lift of the first constraint, and the
compatibility uniqueness forces that lift to satisfy all other constraints. It
is one-to-one on the component because it is one-to-one on the chosen lift of
the first constraint. Therefore \(q\) maps the component homeomorphically onto
\(C\). Finally, any two choices of the first lifted constraint differ by an
element of \(\mathbb Z^d\), and compatibility forces the same translate for all
other constraints. Thus any two components differ by an element of
\(\mathbb Z^d\). This proves the first item.

For the second item, since the family \(\{C_\alpha\}_{\alpha\in(0,1)}\) is
finite and nested, list the distinct cells as
\[
D_1\subsetneq D_2\subsetneq\cdots\subsetneq D_s.
\]
Let \(D_r=C_\beta\), and set
\[
\widetilde D_r=\widetilde C_\beta.
\]
Suppose first that \(a<r\), and assume that \(\widetilde D_{a+1}\) has already
been defined. Since
\[
D_a\subset D_{a+1},
\]
and \(q\) maps \(\widetilde D_{a+1}\) homeomorphically onto \(D_{a+1}\), the
set
\[
\widetilde D_{a+1}\cap q^{-1}(D_a)
\]
is nonempty and maps homeomorphically onto \(D_a\). It is contained in a single
connected component of \(q^{-1}(D_a)\). Since every component of \(q^{-1}(D_a)\)
maps homeomorphically onto \(D_a\), this intersection is exactly that component.
Define
\[
\widetilde D_a=
\widetilde D_{a+1}\cap q^{-1}(D_a).
\]
Continuing downward defines
\[
\widetilde D_1\subset\cdots\subset\widetilde D_r.
\]

Now suppose that \(a>r\), and assume that \(\widetilde D_{a-1}\) has already
been defined. Since
\[
D_{a-1}\subset D_a,
\]
the connected set \(\widetilde D_{a-1}\) is contained in a unique connected
component of \(q^{-1}(D_a)\). Define \(\widetilde D_a\) to be that component.
Continuing upward defines
\[
\widetilde D_r\subset\cdots\subset\widetilde D_s.
\]
Finally, set
\[
\widetilde C_\alpha=\widetilde D_a
\qquad
\text{whenever } C_\alpha=D_a.
\]
This gives the desired nested lifted family and proves the second item.

For the third item, let
\[
\{\widetilde C_\alpha\}
\qquad\text{and}\qquad
\{\widehat C_\alpha\}
\]
be two nested lift choices. By the first item, for each \(\alpha\) there is
\(p(\alpha)\in\mathbb Z^d\) such that
\[
\widehat C_\alpha=\widetilde C_\alpha+p(\alpha).
\]
If \(0<\alpha\leq\beta<1\), then nesting gives
\[
\widetilde C_\alpha+p(\alpha)
\subset
\widetilde C_\beta+p(\beta).
\]
But also
\[
\widetilde C_\alpha+p(\alpha)
\subset
\widetilde C_\beta+p(\alpha).
\]
The two sets
\[
\widetilde C_\beta+p(\alpha)
\qquad\text{and}\qquad
\widetilde C_\beta+p(\beta)
\]
are components of \(q^{-1}(C_\beta)\), and they intersect. Hence they are
equal, so \(p(\alpha)=p(\beta)\). Therefore \(p(\alpha)\) is independent of
\(\alpha\); call the common value \(p\). Using translation equivariance of center of mass,
\[
c(\widehat C_\alpha)=c(\widetilde C_\alpha+p)=c(\widetilde C_\alpha)+p.
\]
Hence
\[
\int_0^1c(\widehat C_\alpha)\,d\alpha
=
\int_0^1c(\widetilde C_\alpha)\,d\alpha+p,
\]
and applying \(q\) gives the desired independence.

%Finally, for fixed \(n\), there are only finitely many words
%\[
%\omega\in\{*,1,\ldots,k\}^{\{0,\ldots,n-1\}}.
%\]
%For each word with at least one effective coordinate and
%\(C_\omega^\circ\neq\varnothing\), the first item says that the components of
%\(q^{-1}(C_\omega)\) are all integer translates of one compact convex shape.
%Since there are only finitely many such shapes, and since \(B_0\) is bounded,
%only finitely many integer translates of these shapes can meet \(B_0\). Their
%union is contained in some bounded set \(B_1\). This proves the fourth item.
\end{proof}

\subsection{The spoke cover and the cone map}

Set
\[
Z=\Spoke(Y).
\]
Let \(p_1,\ldots,p_k\) be the spoke directions used in the definition of
\(\Spoke(Y)\), and let
\[
S=\bigcup_{i=1}^k p_i[0,1]
\]
be the one-coordinate spoke set. Thus every \(z\in Z\) can be written as
\[
z=\Phi(y,\rho),
\qquad
y\in Y,\quad \rho\in[0,1]^{\mathbb Z},
\]
where
\[
z_m=\rho_m p_{y_m}.
\]

For \(1\leq i\leq k\), define
\[
A_i=\{w\in S: |w|<1/2\text{ or }w\in p_i[0,1]\},
\]
and
\[
\widehat U_i=\{z\in Z:z_0\in A_i\}.
\]
Let
\[
\widehat {\cU}=\{\widehat U_1,\ldots,\widehat U_k\}.
\]
Each \(\widehat U_i\) is open in \(Z\), because the complement of \(A_i\) in
\(S\) is the closed set
\[
\bigcup_{\ell\neq i}p_\ell[1/2,1].
\]

Fix \(n\geq1\). For \(z\in Z\), set
\[
M(z,n)=\max_{0\leq r<n}|z_r|,
\qquad
t(z,n)=\min\{1/2,M(z,n)\}.
\]
If \(t(z,n)>0\), then for \(\alpha\in(0,1)\) define
\[
\omega^\alpha(z,n)\in\{*,1,\ldots,k\}^{\{0,\ldots,n-1\}}
\]
as follows. For \(0\leq r<n\), put
\[
\omega_r^\alpha(z,n)=*
\quad\Longleftrightarrow\quad
|z_r|<t(z,n)\alpha.
\]
If
\[
|z_r|\geq t(z,n)\alpha,
\]
then \(z_r\neq0\), and so \(z_r\) lies on a unique spoke \(p_i(0,1]\). In this
case set
\[
\omega_r^\alpha(z,n)=i.
\]

Write \(z=\Phi(y,\rho)\). If
\(\omega_r^\alpha(z,n)=i\), then \(z_r\neq0\), so the spoke label is uniquely
determined and \(y_r=i\). Therefore the effective coordinates of
\(\omega^\alpha(z,n)\) form a finite word appearing in \(y\in Y\). By the construction and the above, we clearly have
\[
C_{\omega^\alpha(z,n)}^\circ\neq\varnothing.
\]
Moreover, if \(0<\alpha\leq\beta<1\), then every coordinate effective for
\(\omega^\beta(z,n)\) is also effective for \(\omega^\alpha(z,n)\), with the
same label. Hence
\[
C_{\omega^\alpha(z,n)}
\subset
C_{\omega^\beta(z,n)}.
\]
Finally, if \(r_0\) satisfies
\[
|z_{r_0}|=M(z,n),
\]
then \(r_0\) is effective for every \(\alpha\in(0,1)\). Hence if $t(z,n)>0$, all cells
\(C_{\omega^\alpha(z,n)}\) have at least one effective constraint.

Let
\[
\operatorname{Cone}(\mathbb T^d)
=
\mathbb T^d\times[0,1/2]\big/\sim,
\]
where
\[
(x,0)\sim(x',0)
\qquad(x,x'\in\mathbb T^d).
\]
We write its points as \([x,t]\), with \(x\in\mathbb T^d\) and
\(0\leq t\leq1/2\). Let \(0_{\mathbb T^d}\) denote the identity element of
\(\mathbb T^d\).

We now define
\[
\phi_n:Z\to\operatorname{Cone}(\mathbb T^d).
\]
If
\[
t(z,n)=0,
\]
set
\[
\phi_n(z)=[0_{\mathbb T^d},0].
\]
Assume now that
\[
t(z,n)>0.
\]
Write
\[
C_\alpha=C_{\omega^\alpha(z,n)}.
\]
By the above, \(\{C_\alpha\}_{\alpha\in(0,1)}\) is a
finite nested family of nonempty cells, every \(C_\alpha\) has nonempty
open cell, and every \(C_\alpha\) has at least one effective constraint. We may now use Lemma~\ref{lift-bookkeeping}.(3) to define a point $x(z,n)\in \T^d$ by 
\[
x(z,n)=
c(\{ C_\alpha\}).
\]
Set
\[
\phi_n(z)=[x(z,n),t(z,n)].
\]

\begin{Proposition}[The cone map]
\label{cone-map-continuity}
For every \(n\geq1\), the map
\[
\phi_n:Z\to\operatorname{Cone}(\mathbb T^d)
\]
is well-defined and continuous.
\end{Proposition}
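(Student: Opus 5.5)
Well-definedness is mostly bookkeeping already done before the statement. The family $\{C_\alpha\}$ is nested, with nonempty open cells and at least one effective constraint each. It is finite, since $\omega^\alpha(z,n)$ takes only finitely many values and, as a function of $\alpha$, is piecewise constant on intervals determined by the thresholds $|z_r|/t(z,n)$. So Lemma~\ref{lift-bookkeeping}(2) produces a nested lift, and Lemma~\ref{lift-bookkeeping}(3) shows that $x(z,n)$ does not depend on the anchor. The integral $\int_0^1 c(\widetilde C_\alpha)\,d\alpha$ makes sense because $\alpha\mapsto c(\widetilde C_\alpha)$ is a step function. We also need that $\omega^\alpha(z,n)$ depends only on $z$, not on the representation $z=\Phi(y,\rho)$; this holds because labels are read off from nonzero $z_r$ only.

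For continuity, first note that $t(\cdot,n)$ is continuous, since it depends only on $z_0,\dots,z_{n-1}$. At points with $t(z,n)=0$, continuity is automatic from the cone topology: $\phi_n(z')=[x,t(z',n)]$ with $t(z',n)\to0$ converges to the cone point regardless of $x$.

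So fix $z$ with $t=t(z,n)>0$ and let $z^{(j)}\to z$. Put $\lambda_r=|z_r|/t$, and let $\Lambda\subset[0,1]$ be the finite set $\{\lambda_r\}$ together with the endpoints. For $\alpha$ in a compact subset of $(0,1)\setminus\Lambda$, the word $\omega^\alpha(z^{(j)},n)$ equals $\omega^\alpha(z,n)$ for large $j$. Indeed, coordinates with $|z_r|>t\alpha$ stay effective with the same spoke label, since $z^{(j)}_r$ is near a point on the open spoke $p_i(0,1]$. Coordinates with $|z_r|<t\alpha$ stay starred. This is where the choice of a strict threshold combined with continuity of $t$ matters.

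The key step is to compare lifts. Anchor the lifts for $z^{(j)}$ and for $z$ at the top cell of the same $\alpha$-interval, and align them by the same integer translate. Then $c(\widetilde C^{(j)}_\alpha)=c(\widetilde C_\alpha)$ for all $\alpha$ outside a set of measure tending to $0$. On that exceptional set, the lifted cells are convex with bounded diameter; this holds because each contains a lift of some $P_i$ of diameter $<1/2$ and all are nested in one component. To make this uniform I would anchor at a large $\alpha$ close to $1$. The two nested families share all cells except near the finitely many thresholds, so the exceptional cells stay inside the common big component, up to the finitely many ways cells near thresholds can differ.

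The main obstacle is exactly this: making the nested lifts for $z^{(j)}$ and for $z$ consistent across the thresholds. Near a threshold $\lambda_r$, the cell for $z^{(j)}$ may differ from that of $z$, or the top cell of $z^{(j)}$ near $\alpha\to1$ may differ. I would handle this by anchoring both families at some $\alpha_0\notin\Lambda$ where the cells coincide. Nestedness then forces every lifted cell for $\alpha<\alpha_0$ into $\widetilde C_{\alpha_0}$, and every cell for $\alpha>\alpha_0$ to contain it. Within each $\alpha$-interval avoiding $\Lambda$, the unique-component property from Lemma~\ref{lift-bookkeeping}(1) forces the lifts to agree exactly.

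The integrands therefore differ only on a set of $\alpha$ of measure $o(1)$. On that set the centres lie in a bounded region determined by $z$: the top cell is the cell with the fewest constraints, still containing an effective one, so all lifts sit inside a translate of some $\widetilde P_i$ of diameter $<1/2$. Hence the integrals converge. Applying $q$, we get $x(z^{(j)},n)\to x(z,n)$, and together with $t(z^{(j)},n)\to t$ this gives $\phi_n(z^{(j)})\to\phi_n(z)$ in $\operatorname{Cone}(\mathbb T^d)$.
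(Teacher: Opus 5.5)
Your proposal is correct and follows essentially the paper's argument. The words $\omega^\alpha(z^{(j)},n)$ agree with $\omega^\alpha(z,n)$ uniformly for $\alpha$ away from the thresholds $|z_r|/t$. Anchoring both nested lift families at a common good $\alpha_0$ then forces the lifted cells, and hence their centres of mass, to coincide there by uniqueness of components. The remaining set of $\alpha$ has small measure, and the integrand is uniformly bounded on it.

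Your direct justification of that bound is more self-contained than the paper's. You argue that every lifted cell is nested with the fixed anchor and is \emph{contained in} (not ``contains'', as you first wrote) a translate of a lifted $P_i$ of diameter $<1/2$. The paper instead appeals to a bounded-set property of Lemma~\ref{lift-bookkeeping} that the lemma as stated does not include.
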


\begin{proof}
The preceding discussion proves that \(\phi_n\) is well-defined. It remains to
prove continuity.

Let
\[
z^{(m)}\to z
\]
in \(Z\). If
\[
t(z,n)=0,
\]
then
\[
t(z^{(m)},n)\to0,
\]
so \(\phi_n(z^{(m)})\) converges to the cone vertex.

Assume now that
\[
t:=t(z,n)>0.
\]
Set
\[
t_m=t(z^{(m)},n).
\]
Then
\[
t_m\to t.
\]
For \(0\leq r<n\), define
\[
\theta_r=\frac{|z_r|}{t},
\qquad
\theta_r^{(m)}=\frac{|z_r^{(m)}|}{t_m}.
\]
Then
\[
\theta_r^{(m)}\to\theta_r.
\]

Choose a nested lift family
\[
\{\widetilde C_\alpha\}_{\alpha\in(0,1)}
\]
for the cells \(C_{\omega^\alpha(z,n)}\), and set
\[
F(\alpha)=c(\widetilde C_\alpha).
\]
Since only finitely many lifted cells occur, their union is contained in a
bounded set \(B_0\subset\mathbb R^d\).

Fix \(\eta>0\) small enough that
\[
6n\eta<1.
\]
Define
\[
G(\eta)
=
(0,1)\setminus
\bigcup_{r=0}^{n-1}
\bigl((\theta_r-3\eta,\theta_r+3\eta)\cap(0,1)\bigr),
\]
and let
\[
E(\eta)=(0,1)\setminus G(\eta).
\]
Then \(G(\eta)\neq\varnothing\) and
\[
|E(\eta)|\leq6n\eta.
\]
Choose
\[
\beta\in G(\eta).
\]

For all sufficiently large \(m\), and for every \(\alpha\in G(\eta)\), the
inequalities
\[
|z_r|<t\alpha
\qquad\text{and}\qquad
|z_r^{(m)}|<t_m\alpha
\]
agree for all \(0\leq r<n\). Indeed, these inequalities are equivalent to
\[
\theta_r<\alpha
\qquad\text{and}\qquad
\theta_r^{(m)}<\alpha,
\]
and \(\alpha\) stays away from the thresholds \(\theta_r\). If \(z_r\neq0\),
then the spoke label of \(z_r^{(m)}\) is eventually the same as the spoke label
of \(z_r\). If \(z_r=0\), then for \(\alpha\in G(\eta)\) the coordinate is
starred for all sufficiently large \(m\). Therefore
\[
\omega^\alpha(z^{(m)},n)=\omega^\alpha(z,n)
\qquad(\alpha\in G(\eta))
\]
for all sufficiently large \(m\).

In particular,
\[
C_{\omega^\beta(z^{(m)},n)}
=
C_{\omega^\beta(z,n)}
\]
for all sufficiently large \(m\). Thus we may use the already chosen lifted
component \(\widetilde C_\beta\) as the anchor for the nested lift family
associated to \(z^{(m)}\). Let
\[
\{\widetilde C_\alpha^{(m)}\}_{\alpha\in(0,1)}
\]
be the corresponding nested lift family, and set
\[
F_m(\alpha)=c(\widetilde C_\alpha^{(m)}).
\]
For \(\alpha\in G(\eta)\), the cells for \(z\) and \(z^{(m)}\) agree, and the
two nested lift families have the same anchor. Hence
\[
F_m(\alpha)=F(\alpha)
\qquad(\alpha\in G(\eta))
\]
for all sufficiently large \(m\).

It remains to estimate the integral over \(E(\eta)\). Every lifted cell in the
anchored families for \(z^{(m)}\) intersects the anchor \(\widetilde C_\beta\),
and
\[
\widetilde C_\beta\subset B_0.
\]
By Lemma~\ref{lift-bookkeeping}, there is a bounded set
\(B_1\subset\mathbb R^d\), depending only on \(B_0\) and \(n\), containing all
lifted cells occurring in these anchored families. Hence there is \(L>0\) such
that
\[
|F(\alpha)|\leq L,
\qquad
|F_m(\alpha)|\leq L
\]
for all relevant \(\alpha\) and all large \(m\). Therefore
\[
\left|
\int_0^1F_m(\alpha)\,d\alpha
-
\int_0^1F(\alpha)\,d\alpha
\right|
\leq
2L|E(\eta)|
\leq
12Ln\eta.
\]
Letting first \(m\to\infty\) and then \(\eta\to0\), we get
\[
q\left(\int_0^1F_m(\alpha)\,d\alpha\right)
\to
q\left(\int_0^1F(\alpha)\,d\alpha\right).
\]
Since \(t_m\to t\), this proves
\[
\phi_n(z^{(m)})\to\phi_n(z).
\]
Thus \(\phi_n\) is continuous.
\end{proof}

\subsection{Compatibility and bounded dimension growth}

\begin{Proposition}[Compatibility of the cone map]
\label{cone-map-compatibility}
For every \(n\geq1\),
\[
\mathcal D({\widehat \cU}_0^{n-1})\leq d+1.
\]
Consequently,
\[
\mdim(Z,\sigma,\widehat \cU)=0.
\]
\end{Proposition}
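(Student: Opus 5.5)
The plan is to show that $\phi_n$ is $\widehat\cU_0^{n-1}$-compatible, i.e.\ to produce a finite open cover $\mathcal W$ of $\operatorname{Cone}(\mathbb T^d)$ such that $\phi_n^{-1}(\mathcal W)$ refines $\widehat\cU_0^{n-1}$. Since $\operatorname{Cone}(\mathbb T^d)$ is a compact space of covering dimension $d+1$, $\mathcal W$ has an open refinement of order $\le d+1$. Pulling that refinement back by the continuous map $\phi_n$ (Proposition~\ref{cone-map-continuity}) gives an open cover of $Z$ of order $\le d+1$ refining $\widehat\cU_0^{n-1}$, so $\mathcal D(\widehat\cU_0^{n-1})\le d+1$. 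Dividing by $n$ and letting $n\to\infty$ then gives $\mdim(Z,\sigma,\widehat\cU)=0$.

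\emph{The open set at the vertex.} Take $W_0=\{[x,t]:t<1/2\}$, which is an open neighbourhood of the vertex. If $\phi_n(z)\in W_0$, then $t(z,n)<1/2$, so $|z_r|<1/2$ for every $0\le r<n$. Hence $z_r\in A_i$ for all $i$, and $z$ lies in every element of $\widehat\cU_0^{n-1}$. It remains to cover the compact set $\{[x,1/2]\}\cong\mathbb T^d$.

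\emph{A uniform interior estimate.} This is the step I expect to be the main obstacle, since it supplies the quantitative separation from the atom boundaries. For fixed $n$ there are only finitely many words $\omega\in\{*,1,\ldots,k\}^{\{0,\ldots,n-1\}}$ with at least one effective coordinate and $C_\omega^\circ\neq\varnothing$. By Lemma~\ref{lift-bookkeeping}(1), each such word has, up to $\mathbb Z^d$-translation, a single lifted component, which is convex with nonempty interior. Hence its centre of mass lies in its interior, and I will take $\delta>0$ such that every such centre is at distance $\ge\delta$ from the complement of its lifted cell.

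Now let $t(z,n)=1/2$ and let $r$ satisfy $|z_r|\ge 1/2$, with $z_r$ on the spoke $p_j$. Then $r$ is effective with label $j$ for every $\alpha\in(0,1)$, because $t\alpha<1/2$. Consequently every nested lift $\widetilde C_\alpha$ lies inside one lift $L$ of $R^{-r}P_j$, namely a translate of $\widetilde P_j$, and these lifts are compatible by the uniqueness argument in Lemma~\ref{lift-bookkeeping}. Each $c(\widetilde C_\alpha)$ lies in the $\delta$-interior of $\widetilde C_\alpha\subset L$. The $\delta$-interior of the convex set $L$ is convex, so the average $\int_0^1 c(\widetilde C_\alpha)\,d\alpha$ also lies in it. Since $\operatorname{diam}\widetilde P_j<1/2$, projecting by $q$ shows that the $\delta$-ball around $R^r x(z,n)$ in $\mathbb T^d$ is contained in $P_j$.

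\emph{The open sets along the base.} For $x_0\in\mathbb T^d$, set
\[
W_{x_0}=\{[x,t]: t>1/4,\ \operatorname{dist}(x,x_0)<\delta/2\},
\]
and choose the word $(i_0,\ldots,i_{n-1})$ as follows. If $R^r x_0$ lies in $\operatorname{int}P_j=U_j$ for some $j$, put $i_r=j$; otherwise choose $i_r$ arbitrarily.

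Suppose $\phi_n(z)\in W_{x_0}$ and $0\le r<n$. If $|z_r|<1/2$, then $z_r\in A_{i_r}$ trivially. If $|z_r|\ge 1/2$, then $t(z,n)=1/2$, and $z_r$ lies on some spoke $p_j$. By the interior estimate, the $\delta$-ball around $R^r x$ is contained in $P_j$. Since $R$ is an isometry, $R^r x_0$ lies within $\delta/2$ of $R^r x$, so $R^r x_0\in\operatorname{int}P_j=U_j$. This forces $j=i_r$, and therefore $z_r\in A_{i_r}$.

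Thus $\phi_n^{-1}(W_{x_0})\subset\bigcap_{r<n}\sigma^{-r}\widehat U_{i_r}$. The sets $W_0$ and $W_{x_0}$ for $x_0\in\mathbb T^d$ cover the cone, and compactness yields the required finite cover $\mathcal W$.
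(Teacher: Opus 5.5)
Your proof is correct, and its core mechanism matches the paper's. When $|z_r|\ge 1/2$, the coordinate $r$ is effective for every $\alpha$. All nested lifts $\widetilde C_\alpha$ then sit in one translate of $\widetilde P_j$, and convexity keeps the averaged centre of mass inside that translate.

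The difference is in how compatibility of $\phi_n$ with $\widehat\cU_0^{n-1}$ is obtained. The paper proves only the pointwise detection property $R^r x\in U_i$. From it, the paper deduces that each fiber $\phi_n^{-1}(Q)$ lies in a single element of $\widehat\cU_0^{n-1}$. It then invokes the general compactness fact \cite[Prop.\ 2.3]{Lindenstrauss-Weiss}: a continuous map whose fibers each lie in some element of a finite open cover is compatible with that cover.

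You instead strengthen detection to a uniform margin. For fixed $n$ only finitely many lifted cell shapes occur, which gives a single $\delta>0$ with $B(R^r x,\delta)\subset P_j$. The $\delta$-interior of a convex set is convex, so it survives the averaging. This lets you write down the open cover $\{W_0\}\cup\{W_{x_0}\}$ of the cone explicitly and verify the refinement directly.

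Your route is self-contained and produces a concrete cover with neighbourhood size controlled by $\delta$. The price is the extra finiteness and $\delta$-interior bookkeeping. The paper's route is shorter because the compactness step is delegated to a standard lemma.

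One minor point: the hypothesis $\operatorname{diam}\widetilde P_j<1/2$ is not needed in your projection step. For any $\delta$, $q(B(\bar y,\delta))$ is exactly the $\delta$-ball about $q(\bar y)$ in the flat metric on $\mathbb T^d$.
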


\begin{proof}
Fix \(n\geq1\). By Proposition~\ref{cone-map-continuity}, the map
\[
\phi_n:Z\to\operatorname{Cone}(\mathbb T^d)
\]
is continuous.

We first prove the key detection property. Suppose
\[
t(z,n)=1/2,
\qquad
\phi_n(z)=[x,1/2],
\]
and suppose that for some \(0\leq r<n\) and some \(i\),
\[
z_r\in p_i[1/2,1].
\]
Then, for every \(\alpha\in(0,1)\), the coordinate \(r\) is effective in
\(\omega^\alpha(z,n)\), with label \(i\). Hence
\[
C_{\omega^\alpha(z,n)}\subset R^{-r}P_i
\qquad(\alpha\in(0,1)).
\]
Choose the nested lift family used in the definition of \(\phi_n(z)\). Since
the lifted cells are nested, all these lifted cells lie in one common lift of
\(R^{-r}P_i\). Their center of mass lie in the interiors of the lifted cells,
hence in the interior of this common lift of \(R^{-r}P_i\). Since this interior
is convex,
\[
\int_0^1c(\widetilde C_\alpha)\,d\alpha
\]
also lies in the interior of the common lift. Therefore
\[
x\in R^{-r}U_i,
\]
or equivalently,
\[
R^r x\in U_i.
\]

We now show that every fiber of \(\phi_n\) is contained in one element of
\(\widehat \cU_0^{n-1}\). Let
\[
Q\in \phi_n(Z),
\qquad
F_Q=\phi_n^{-1}(Q).
\]
If \(Q\) has cone height \(<1/2\), then every \(z\in F_Q\) satisfies
\[
|z_r|<1/2
\qquad(0\leq r<n).
\]
Thus \(z_r\in A_i\) for every \(i\) and every \(0\leq r<n\). Hence \(F_Q\) is
contained in any element of \(\widehat \cU_0^{n-1}\).

Now suppose that
\[
Q=[x,1/2].
\]
Fix \(0\leq r<n\). If every \(z\in F_Q\) satisfies
\[
|z_r|<1/2,
\]
choose an arbitrary label \(\ell_r\). Otherwise, choose \(z\in F_Q\) and \(i\)
such that
\[
z_r\in p_i[1/2,1].
\]
By the detection property,
\[
R^r x\in U_i.
\]
If \(z'\in F_Q\) also satisfies
\[
z'_r\in p_j[1/2,1],
\]
then again
\[
R^r x\in U_j.
\]
Since the sets \(U_1,\ldots,U_k\) are pairwise disjoint, we get \(i=j\). Thus
the visible label at coordinate \(r\) is uniquely determined by \(Q\). Call it
\(\ell_r\).

With this choice of labels, every \(z\in F_Q\) satisfies
\[
z_r\in A_{\ell_r}
\qquad(0\leq r<n).
\]
Therefore
\[
F_Q
\subset
\bigcap_{r=0}^{n-1}\sigma^{-r}\widehat U_{\ell_r}.
\]
Thus every fiber of \(\phi_n\) is contained in one element of
\(\widehat \cU_0^{n-1}\).

By a standard compactness argument, if a continuous map from a compact
space to a compact metrizable space has every fiber contained in some element
of a finite open cover, then the map is compatible with that cover (see \cite[Prop.\ 2.3]{Lindenstrauss-Weiss}). Applying
this to \(\phi_n\), we see that \(\phi_n\) is compatible with
\(\widehat \cU_0^{n-1}\).

Since
\[
\dim\operatorname{Cone}(\mathbb T^d)\leq d+1,
\]
we obtain
\[
\mathcal D(\widehat \cU_0^{n-1})\leq d+1,
\]
hence
\(
\mdim(Z,\sigma,\widehat \cU)
=0\).
\end{proof}

\subsection{Concluding remarks}
To conclude, we give some additional properties of the construction studied in this section.

\begin{Theorem}
\label{regular-spoke-family-CPMD-not-UPMD}
For the system \((Z,\sigma)\) constructed above, the following hold:
\begin{enumerate}
\item \((Z,\sigma)\) has CPMD.
\item \((Z,\sigma)\) is not UPMD.
\end{enumerate}
More precisely, there is a standard open cover
\[
\beta=\{W_0,W_1\}
\]
such that
\[
\mdim(Z,\sigma,\beta)=0.
\]
Moreover,
\[
\mathcal N(\beta_0^{n-1})\geq n+1
\qquad(n\geq1).
\]
If, in addition, the elements of the partition on the torus are given by rectangles with sides parallel to the standard vectors, 
then there is \(C>0\) such that
\[
\mathcal N(\beta_0^{n-1})\leq Cn^d
\qquad(n\geq1),
\]
and hence
\[
h_{\mathrm{top}}(Z,\sigma,\beta)=0.
\]
In this case
\((Z,\sigma)\) is neither UPMD nor UPE.
\end{Theorem}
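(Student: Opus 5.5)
Part (1) follows at once from results already proved. By Lemma~\ref{regular-Y-full-support-measure}, the base \((Y,\sigma)\) carries a shift-invariant measure of full support. Theorem~\ref{Invariant measure of full support implies CPMD} then gives that \(Z=\Spoke(Y)\) has CPMD. For (2), I will collapse the \(k\)-set cover \(\widehat\cU\) into two sets:
\[
W_0=\widehat U_1,\qquad W_1=\widehat U_2\cup\cdots\cup\widehat U_k=\{z\in Z: z_0\in A_2\cup\cdots\cup A_k\}.
\]
\emph{Standard cover.} \(\beta=\{W_0,W_1\}\) covers \(Z\) because \(\widehat\cU\) does. Neither set is dense. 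Each symbol \(\ell\) occurs in \(Y\), since \(U_\ell\) is a nonempty open set and \(R\) is minimal. Hence the open set \(\{z: z_0\in p_\ell(1/2,1]\}\) is nonempty. For \(\ell\neq1\) it misses \(W_0\), and for \(\ell=1\) it misses \(W_1\).
\emph{Mean dimension zero.} \(\widehat\cU\) refines \(\beta\), so \(\widehat\cU_0^{n-1}\) refines \(\beta_0^{n-1}\). Proposition~\ref{cone-map-compatibility} then gives \(\mathcal D(\beta_0^{n-1})\le d+1\), hence \(\mdim(Z,\sigma,\beta)=0\).

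For the covering bounds I will compare \(\mathcal N(\beta_0^{n-1})\) with the language of the collapsed shift. Let \(\pi:\{1,\dots,k\}\to\{0,1\}\) send \(1\mapsto0\) and every other symbol to \(1\). Let \(L_n\) be the set of words \(\pi(y_0)\cdots\pi(y_{n-1})\) with \(y\in Y\).
\emph{Upper bound.} Every \(z=\Phi(y,\rho)\) lies in \(\bigcap_r\sigma^{-r}W_{\pi(y_r)}\), because \(z_r\in p_{y_r}[0,1]\subset A_{y_r}\). Hence \(\mathcal N(\beta_0^{n-1})\le|L_n|\).
\emph{Lower bound.} Given \(w\in L_n\), take \(y\in Y\) realizing \(w\) and set \(\rho_r=1\) for \(0\le r<n\). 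The resulting point satisfies \(z_r\in p_{y_r}\{1\}\), so it lies in \(\sigma^{-r}W_b\) only for \(b=\pi(y_r)\). Thus it lies in the single element indexed by \(w\), and distinct words force distinct elements of any subcover. Therefore \(\mathcal N(\beta_0^{n-1})=|L_n|\).

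The estimate \(|L_n|\ge n+1\) is then the Morse--Hedlund theorem, once we show that the collapsed coding of some regular point is not eventually periodic. This is the step I expect to need the most care. Suppose \(v\in\mathbb T^d_{\rm reg}\) had collapsed coding eventually periodic with period \(p\). Then \(R^m v\in U_1\iff R^{m+p}v\in U_1\) for all large \(m\). The orbit tail is dense, \(R^p\) is minimal (since \(1,pa_1,\dots,pa_d\) are still rationally independent), and \(U_1\) is regular open. From these I will deduce \(R^p U_1=U_1\), so \(U_1\) is a nonempty open \(R^p\)-invariant set and hence dense. This contradicts the fact that \(U_1\) is disjoint from the nonempty open set \(U_2\). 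I would run this argument carefully through closures and interiors, using \(U_1=\operatorname{int}P_1\).

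For the rectangle case I use \((\star)\). Each word of length \(n\) in \(Y\) gives a nonempty open cell \(\bigcap_{r<n}R^{-r}U_{i_r}\), and distinct words give disjoint cells. All these cells are unions of the connected components of the complement of \(\bigcup_{r<n}R^{-r}B\). When the \(P_i\) are axis-parallel boxes, \(\bigcup_{r<n}R^{-r}B\) lies in a union of at most \(2kn\) coordinate hyperplanes in each of the \(d\) directions. The complement therefore has at most \((2kn)^d\) components, giving \(|L_n|\le Cn^d\) with \(C=(2k)^d\). It follows that \(h_{\mathrm{top}}(Z,\sigma,\beta)=\lim_n\frac{1}{n}\log(Cn^d)=0\), so \(\beta\) witnesses the failure of both UPMD and UPE.
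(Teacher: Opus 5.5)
Your proposal is correct and follows the same route as the paper: CPMD from Lemma~\ref{regular-Y-full-support-measure} and Theorem~\ref{Invariant measure of full support implies CPMD}, and $\beta$ obtained by merging elements of $\widehat\cU$ so that Proposition~\ref{cone-map-compatibility} bounds $\mathcal D(\beta_0^{n-1})$. The remaining steps also match: non-periodicity of the binary coding via $U_1=\operatorname{int}P_1$ and minimality of $R^p$, then Morse--Hedlund, and radius-one realizations of words for the lower bound on $\mathcal N(\beta_0^{n-1})$. You also spell out what the paper leaves implicit, namely that $\beta$ is standard, that $\mathcal N(\beta_0^{n-1})=|L_n|$, and the hyperplane count in the rectangle case. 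In that count, note explicitly that every component of $\mathbb T^d\setminus\bigcup_{r<n}R^{-r}B$ contains a component of the complement of the hyperplanes, because a finite union of hyperplanes has empty interior.
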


\begin{proof}
    
By Lemma~\ref{regular-Y-full-support-measure}, the symbolic system
\((Y,\sigma)\) carries a shift-invariant Borel probability measure of full
support. Therefore, by Theorem~\ref{Invariant measure of full support implies CPMD},
\[
(Z,\sigma)=\Spoke(Y)
\]
has CPMD. To construct such a partition $\beta$,
choose any
\(
\varnothing\neq U_j\neq\mathbb T^d
\),
and set
\[
W_0=\widehat U_j,
\qquad
W_1=\bigcup_{\ell\neq j}\widehat U_\ell,
\qquad
\beta=\{W_0,W_1\}.
\]
Then \(\beta\) is a standard open cover of \(Z\). 

Moreover, \(\beta\) is coarser than \(\widehat \cU\). Hence
\[
\mathcal D(\beta_0^{n-1})
\leq
\mathcal D(\widehat \cU_0^{n-1})
\leq d+1
\qquad(n\geq1),
\]
and therefore
\[
\mdim(Z,\sigma,\beta)=0.
\]
However, we claim that $\mathcal N(\beta_0^{n-1})\to\infty$. Let
\[
\pi_j:Y\to\{0,1\}^{\mathbb Z}
\]
be the binary factor defined by
\[
\pi_j(y)_m=
\begin{cases}
0, & y_m=j,\\
1, & y_m\neq j.
\end{cases}
\]
Set
\[
X_j=\pi_j(Y).
\]

We first note that \(X_j\) is nonperiodic. Indeed, if the regular binary coding
of some \(v\in\mathbb T^d_{\rm reg}\) had period \(q\geq1\), then along the
dense orbit of \(v\) the condition of lying in \(U_j\) would be unchanged by
applying \(R^q\). Hence \(R^q(U_j)\subset P_j\), and since \(R^q(U_j)\) is open
and \(U_j=\operatorname{int}(P_j)\), we get
\[
R^q(U_j)\subset U_j.
\]
Applying the same argument to \(R^{-q}\) gives
\[
R^q(U_j)=U_j.
\]
This is impossible, since \(R^q\) is minimal and \(U_j\) is a nonempty proper
open set. Thus \(X_j\) is nonperiodic.

Since \(X_j\) is nonperiodic, a standard word-complexity bound gives
\[
p_{X_j}(n)\geq n+1
\qquad(n\geq1),
\]
where \(p_{X_j}(n)\) is the number of length-\(n\) words appearing in \(X_j\).
Now take any length-\(n\) word in \(X_j\), realize it by some \(y\in Y\), and
choose all radii equal to \(1\). The corresponding point of \(Z\) belongs to
exactly one element of 
\(
\beta_0^{n-1},
\)
and that element is determined by the binary word. Distinct binary words give
distinct indispensable elements of this cover. Therefore
\[
\mathcal N(\beta_0^{n-1})
\geq
p_{X_j}(n)
\geq
n+1.
\]

If we assume that the partition on the torus we are starting from is by rectangles parallel to the standard directions,  the same coding argument
also gives
\[
\mathcal N(\beta_0^{n-1})\leq Cn^d.
\]
\end{proof}

\printbibliography
\end{document}